\makeatletter \@addtoreset{equation}{section} \makeatother
\renewcommand\thefigure{\thesection.\@arabic\c@figure}
\renewcommand\thetable{\thesection.\@arabic\c@table}
\newtheorem{theorem}{Theorem}[section]
\newtheorem{lemma}[theorem]{Lemma}
\newtheorem{proposition}[theorem]{Proposition}
\theoremstyle{definition}
\theoremstyle{remark}
\newcommand{\mc}[1]{{\mathcal #1}}
\newcommand{\bb}[1]{{\mathbb #1}}
\newcommand{\plus}{\!+\!}
\newcommand{\minus}{\!-\!}
\newcommand{\<}{\langle}
\renewcommand{\>}{\rangle}
\DeclareMathOperator{\supp}{supp} 
\DeclareMathOperator{\Geom}{Geom}
\DeclareMathOperator{\Var}{Var}
\title{Equilibrium fluctuations for a discrete Atlas model}
\date{}
\author{F.~Hern\'andez, M.~Jara, F.~Valentim}
\subjclass[2010]{60K35, 60H15, 82C22}
\keywords{Atlas model, zero-range process, equilibrium fluctuations}
\address{\noindent 
Instituto de Matem\'atica, Universidade Federal Fluminense, Rua M\'ario Santos Braga S/N,
Niter\'oi, RJ 24020-140, Brazil
\newline
e-mail: \rm \texttt{freddyhernandez@id.uff.br}}
\address{\noindent IMPA, Estrada Dona Castorina 110, Rio de Janeiro, RJ, 22460-320, Brazil.
\newline
e-mail: \rm \texttt{mjara@impa.br}}
\address{
\noindent
Departamento de Matem\'atica, Universidade Federal do Esp\'irito Santo, Av. Fernando Ferrari, 514, Goiabeiras, Vit\'oria, 29075-910, Brazil.
\newline
e-mail \rm\texttt{fabio.valentim@ufes.br}
}
\begin{document}

\begin{abstract}
We consider a discrete version of the Atlas model, which corresponds to a sequence of zero-range processes on a semi-infinite line, with a source at the origin and a diverging density of particles. We show that the equilibrium fluctuations of this model are governed by a stochastic heat equation with Neumann boundary conditions. As a consequence, we show that the current of particles at the origin converges to a fractional Brownian motion of Hurst exponent $H=\frac{1}{4}$. 
\end{abstract}

\maketitle

\section{Introduction}

The so-called {\em Atlas model} can informally  described as a semi-infinite system of independent Brownian motions on $\bb R$, on which the leftmost particle receives a drift towards the right of strength $\gamma>0$. This model is the simplest example of diffusions with {\em rank-based} interactions. These diffusions interacting through their rank have been proposed as a simple model for  capitalizations in equity markets, see \cite{Fer1}, \cite{IchPapBanKarFer} and the references therein.
In \cite{DemTsa} it is proved that the equilibrium fluctuations of the Atlas model converge in a proper scale to the solution of a stochastic heat equation with reflection at the origin. The ultimate interest of this fluctuation result is that the fluctuations of the leftmost particle can be expressed in the limit as a (singular) linear functional of the solution of the stochastic heat equation. In particular one of the main results of \cite{DemTsa} is that the fluctuations of the leftmost particle are governed by a fractional Brownian motion of Hurst exponent $\frac{1}{4}$.

A key observation in order to derive various results about the Atlas model, is that the sequence of {\em spacings} between the Brownian motions follows a Markovian evolution, for which an invariant measure of product form is known to exist.

A natural question turns out to be what happens with a discrete analogous of the Atlas model. A simple discrete analogous of the Atlas model consists on a semi-infinite system of particles, on which the first one has a drift towards the right and subjected to the so-called {\em exclusion rule}: no particles can share the same position at any time $t$. 
It is well known that such a system of exclusion particles is in bijection with a zero-range process with a reservoir of particles at the origin. We learned this bijection from \cite{LanOllVol}; although the system treated in \cite{LanOllVol} is bi-infinite, the same bijection works for semi-infinite systems of exclusion particles. 
The earliest reference we were able to find on which this bijection appears is \cite{Fer2}. 
In \cite{LanOllVol}, \cite{LanVol} this relation was exploited to obtain various scaling limits of observables of the exclusion process as a consequence of convergence results for analogous quantities in the zero-range process. 

For this reason, we study in this article the {\em stationary current fluctuations} of a zero-range process with a source at the origin. A formal description of this process is the following. Particles live on the semi-infinite lattice $\bb N=\{1,2,\dots\}$. At each site of the lattice $\bb N$ there is a Poissonian clock of rate $2$. Each time the clock at site $x \in \bb N$ rings, one of the particles at site $x$ moves to $x-1$ or $x+1$ with equal probability. If the particle decides to move to $y=0$, then the particle leaves the system. In addition, with exponential rate $\lambda$ a particle is created at site $x=1$. Burke's theorem says that for $\lambda <1$ the product measure with geometric marginals of success rate $\lambda$ is invariant under this evolution. Denote by $J_x(t)$ the signed current of particles between sites $x$ and $x+1$ up to time $t \geq 0$. For $x=0$, $J_0(t)$ denotes the number of particles that entered into the system, minus the number of particles that left the system up to time $t \geq 0$. The current $J_x(t)$ denotes exactly the displacement of the $x$-th particle up to time $t$ in an exclusion process where particles are ordered from left to right. All but the leftmost exclusion particle are symmetric; the leftmost particle jumps to the right with rate $1$ and it jumps to the left with rate $\lambda$. In \cite{LanVol}, it is shown that under a diffusive scaling, the space-time limit of the fluctuations of the density of particles is given by a conservative stochastic heat equation, better known as an {\em Ornstein-Uhlenbeck equation}. With some extra work, this result can be used to derive the scaling limit of the current fluctuations as well.

Let $n \in \bb N$ be a scaling parameter. If we want the exclusion process to serve as a discrete approximation of the Atlas model, it is reasonable to scale $\lambda$ with $n$ in such a way that the leftmost particle behaves like a Brownian motion with drift in the limit $n \to \infty$. Therefore, it is reasonable to take $\lambda_n = 1 -\frac{b}{n}$, where $b>0$ is the drift of the limiting Brownian motion. We will start the zero-range process with the invariant measure associated to $\lambda_n$, namely a product of geometric distributions with parameter $1-\lambda_n$. Notice that the average number of particles per site grows linearly with $n$.

It turns out that the proof of \cite{LanVol} breaks down in that case. The heart of the proof of the convergence of the density fluctuation field is the so-called {\em Boltzmann-Gibbs principle}, which roughly states that any observable of a conservative particle system is asymptotically equivalent to a linear functional of the density of particles. 
The main issue is that the density of particles per site is equal to $\frac{n}{b}-1$ and it grows to infinity as $n \to \infty$. Because of that, a key compactness argument in the classical proof of the so-called Boltzmann-Gibbs principle does not work. In \cite{GonJar1}, a quantitative proof of the Boltzmann-Gibbs principle was proposed. It turns out that in our situation, this alternative proof allows to circumvent the compactness argument by the use of the so-called {\em spectral gap inequality}, which gives a sharp bound on the largest eigenvalue of the dynamics restricted to a finite box. For the version of the zero-range process considered in this article, the spectral gap inequality was proved in \cite{Mor}. For zero-range processes with other interaction rates, see \cite{LanSetVar} and \cite{Nag}. According to \cite{Mor}, the spectral gap of the zero-range process presented here has a non-trivial dependence on the total number of particles. Therefore, the proof of the quantitative Boltzmann-Gibbs principle of \cite{GonJar1} needs to be adapted accordingly. 

With the Boltzmann-Gibbs as principal tool, we prove that in the stationary state the space-time fluctuations of the current converge to the solution of the stochastic heat equation 
\[
\partial_t  X = b^2 \Delta X + \sqrt{2} \dot{\mc W}.
\]
with zero initial datum.
In particular, as in \cite{DemTsa} the current of particles through the origin converges to a Brownian motion of Hurst parameter $\frac{1}{4}$.

This article is organized as follows. In Section \ref{s2} we introduce the zero-range process with a source at the origin, we define what do we understand by the {\em current fluctuation field}  and we state the main results of this article. In Section \ref{s3} we state and give a sketch of proof of various estimates related to the variance of additive functionals of Markov processes. The exposition follows closely the one of \cite{GonJar1}, and proofs already included in \cite{GonJar1} are omitted. In Section \ref{s4} we prove the main results of this article, and in Section \ref{s5} we discuss possible generalizations of the results proved in this article.

\section{Notation and definitions}
\label{s2}
\subsection{The model} 

Let $\Omega_0 = \bb N_0^{\bb N}$\footnote{We denote $\bb N_0 = \{0,1,2,\dots\}$ and $\bb N=\{1,2,\dots\}$.} be the state space of a Markov process which we will describe below. We denote by $\eta = \{\eta(x); x \in \bb N\}$ the elements of $\Omega_0$ and we call them {\em particle configurations}. We call the elements $x \in \bb N$ {\em sites} and we say that $\eta(x)$ is the number of particles at site $x$ on the configuration $\eta$. For $x,y \in \bb N$ and $\eta \in \Omega_0$ such that $\eta(x) \geq 1$ let $\eta^{x,y} \in \Omega_0$ be given by
\[
\left\{
\begin{array}{c@{;\;}l}
\eta(x)-1 & z=x\\
\eta(y)+1 & z=y\\
\eta(z) & z \neq x,y.\\
\end{array}
\right.
\]
In other words, $\eta^{x,y}$ is obtained from $\eta$ by moving a particle from site $x$ to site $y$. For $\eta \in \Omega_0 $ such that $\eta(1) \geq 1$ we define $\eta^{1,0} = \eta - \delta_1$ and for $\eta \in \Omega_0$ we define $\eta^{0,1} = \eta + \delta_1$, where $\delta_1(x) = 0$ if $x \neq 1$ and $\delta_1(x)=1$ if $x =1$. In other words $\eta^{0,1}$ is obtained from $\eta$ by adding a particle at site $x=1$. 

We say that a function $f: \Omega_0 \to \bb R$ is {\em local} if there exists a finite set $A \subseteq \bb N$ such that whenever $\eta(x) = \xi(x)$ for all $x \in A$, $f(\eta) = f(\xi)$. In that case we say that the {\em support} of $f$ is contained in $A$, and we denote it by $\supp(f) \subseteq A$. 

Let $\lambda \in (0,1)$ be given. Let $g: \bb N_0 \to \bb R$ be given by $g(\ell)=1$ if $\ell \neq 0$ and $g(0)=0$. For $f: \Omega_0 \to \bb R$ local we define $L f: \Omega_0 \to \bb R$ as
\[
L f(\eta) = \sum_{x \in \bb N} g(\eta(x)) \big[ f(\eta^{x,x+1})+f(\eta^{x,x-1})-2f(\eta)\big] 
		+ \lambda \big[ f(\eta^{0,1})-f(\eta) \big]
\]
for any $\eta \in \Omega_0$. Although $\eta(0)$ is not defined, it will be convenient to adopt the convention $g(\eta(0))=\lambda$.
Let us explain briefly Andjel's construction of the Markov process associated to the linear operator $L$ defined in this way. First we restrict ourselves to the set
\[
\big\{ \eta \in \Omega_0; \sum_{x \in \bb N} \eta(x) e^{-Mx} < \infty\},
\]
where $M$ is a fixed constant. We will call this set $\Omega$ and we equip it with the product topology. Notice that local functions are indistinctly defined in $\Omega$ or $\Omega_0$. We say that a local function $f: \Omega \to \bb R$ is {\em Lipschitz} if there are $K >0$ and $A \subseteq \bb N$ finite such that
\[
\forall \eta, \xi \in \Omega,  \big| f(\eta) - f(\xi) \big| \leq K \sum_{x \in A} \big| \eta(x) -\xi(x)\big|.
\]
The closure of the operator $L$ restricted to local Lipschitz functions turns out to be the generator of a Markov process $\{\eta_t; t \geq 0\}$ in $\Omega$, which we call the {\em zero-range process} with a source at the origin. The dynamics of this process is not hard to describe. At each site $x \in \bb N$ the process waits an exponential time of rate $2$, at the end of which one particle jumps from $x$ to one of this two neighbors with equal probability. If there are no particles at site $x$ at the moment of jump, nothing happens. At $x=1$, if  the aforementioned particle decides to jump left, it disappears. Additionally, with exponential rate $\lambda$ a particle is created at site $x=1$.

Let $\mu_\lambda$ denote the product geometric measure on $\Omega$:
\[
\mu_\lambda(d\eta) = \prod_{x \in \bb N} (1-\lambda) \lambda^{\eta(x)} d\eta(x).
\]
In principle, $\mu_\lambda$ is defined in $\Omega_0$, but it puts total mass on $\Omega$. It can be verified that $\mu_\lambda$ is invariant and reversible under the evolution of $\{\eta_t; t \geq 0\}$. 

\subsection{The current fluctuations}
\label{s2.2}
Let $n \in \bb N$ be a scaling parameter. Let $b >0$ be fixed and let $\{\lambda_n; n \in \bb N\}$ be defined for simplicity as
$\lambda_n = 1 - \frac{b}{n}$ for any $n \geq b$. All the results on this article can be easily generalized to the case of a sequence $\{\lambda_n; n\in \bb N\}$ in $(0,1)$ such that
\[
\lim_{n \to \infty} n(1-\lambda_n) = b.
\]
Let us denote by $\mu^n$ the measure $\mu_{\lambda_n}$ and
let us denote by $\{\eta_t^n; t \geq 0\}$ the process $\{\eta_{tn^4}; t \geq 0\}$ with initial distribution $\mu^n$. The time scaling may seem a little mysterious right now, but it will turn out to be the right one in our setting. We denote by $\bb P_n$ the distribution of $\{\eta_t^n; t \geq 0\}$ and by $\bb E_n$ the expectation with respect to $\bb P_n$. All these parameters will be fixed from now on and up to the end of this article.

For each $n \in \bb N$ let $J_t^n(x)$ denote the signed number of particles crossing the bond $\{x,x+1\}$ up to time $t$. Similarly, we denote by $J_t^n(0)$ the number of particles created at $x=1$ minus the number of particles annihilated at $x=1$, up to time $t$. Our aim is to obtain the scaling limit of the current process $\{J_t^n(x); t \geq 0, x \in \bb N_0\}$ as $n \to \infty$. For each function $f \in \mc C_c^\infty([0,\infty))$ we define 
\begin{equation}
\label{def}
X_t^n(f) = \frac{1}{n^{5/2}} \sum_{x \in \bb N_0} J_t^n(x) f \big( \tfrac{x}{n}\big) 
+ \frac{1}{n^{3/2}} \sum_{x \in \bb N} \big( \eta_0^n(x) -\rho_n\big) F\big(\tfrac{x}{n}\big)
\end{equation}
for any $t \geq 0$ and any $n \in \bb N$. Here $F: [0,\infty) \to \bb R$ is defined as $F(x) = -\int_x^{\infty} f(y) dy$ for any $x \geq 0$ and $\rho_n = \frac{n}{b}-1 = \int \eta(x) d\mu^n$ is the expected number of particles per site.

In this way we have defined a measure-valued process $\{X_t^n; t \geq 0\}$ which we will call the {\em current fluctuation field} associated to the zero-range process $\{\eta_t^n; t \geq 0\}$. The extra term involving $F$ and $\eta_0^n$ may seem strange right now, but it will allow to get rid of a static drift term on the scaling limit of the current fluctuation field.
%
%

\subsection{The scaling limit}

Let $\dot{\mc W}$ be a standard space-time white noise on $[0,\infty) \times [0,\infty)$. We say that a measure-valued process $\{X_t; t \geq 0\}$ is a {\em martingale solution} of the stochastic heat equation
\begin{equation}
\label{SHE}
\partial_t X = b^2\Delta X + \sqrt{2} \dot{\mc W} 
\end{equation}
with reflecting boundary conditions at $x=0$ if for any function $f \in \mc C_c^\infty([0,\infty))$ such that $f'(0)=0$, the process
\[
X_t(f) - b^2\int_0^t X_s(\Delta f) ds
\]
is a continuous martingale of quadratic variation $2 \int f(x)^2 dx$. In \cite{DemTsa} it is proved that martingale solutions of \eqref{SHE} starting from $X_0 =0$ are unique. Our aim is to prove the following result.

\begin{theorem}
\label{t1}
The sequence $\{X_t^n; t \geq 0\}_{n \in \bb N}$ converges in distribution with respect to the uniform topology to the martingale solution of 
\[
\partial_t X = b^2\Delta X + \sqrt{2} \dot{\mc W} 
\]
with initial condition $X_0=0$.
\end{theorem}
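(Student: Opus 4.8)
The plan is to follow the classical martingale route for equilibrium fluctuations: produce a Dynkin decomposition of $X_t^n(f)$, show tightness of the sequence, and identify every limit point as the unique martingale solution of \eqref{SHE}. Throughout I would fix a test function $f \in \mc C_c^\infty([0,\infty))$ with $f'(0)=0$, as in the definition of martingale solution. Writing the current across the bond $\{x,x+1\}$ as a difference of compensated Poisson counting processes, the compensator of $J_t^n(x)$ has instantaneous rate $n^4\big[g(\eta_s^n(x))-g(\eta_s^n(x+1))\big]$, with the convention $g(\eta(0))=\lambda_n$. Dynkin's formula then gives
\[
X_t^n(f) = X_0^n(f) + \frac{n^4}{n^{5/2}}\int_0^t \sum_{x \in \bb N_0} f\big(\tfrac xn\big)\big[g(\eta_s^n(x))-g(\eta_s^n(x+1))\big]\,ds + M_t^n(f),
\]
where $M_t^n(f)$ is a martingale. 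A summation by parts, using that $f$ is compactly supported, makes the constant expectation $\bb E_n[g]=\lambda_n$ telescope away, so the drift is expressed through the centered variables $g(\eta_s^n(x))-\lambda_n$ tested against the discrete gradient of $f$.

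The core of the argument is the Boltzmann--Gibbs principle, which I would invoke to replace $g(\eta_s^n(x))-\lambda_n$ by its linearization $\bar g'(\rho_n)\big(\eta_s^n(x)-\rho_n\big)$, where $\bar g(\rho)=\rho/(1+\rho)$ is the mean of $g$ under $\mu_\lambda$ at density $\rho$. The decisive scaling identity is $\bar g'(\rho_n)=b^2/n^2$, which cancels exactly against the accelerating factor $n^4$ and the field normalizations, and is the reason for the time scale $n^4$. After this replacement the drift becomes $b^2\int_0^t Y_s^n(f')\,ds$, with $Y_s^n$ the density fluctuation field. Using the conservation law $\eta_t^n(x)-\eta_0^n(x)=J_t^n(x-1)-J_t^n(x)$, a second summation by parts, and the Neumann condition $f'(0)=0$ to dispose of the boundary contribution at the origin, one identifies $Y_s^n(f')=X_s^n(\Delta f)$ up to negligible errors; the time-linear piece $b^2 t\,Y_0^n(f')$ produced in this step is precisely the static drift absorbed into $b^2\int_0^t X_s^n(\Delta f)\,ds$ by the auxiliary field $F$ built into \eqref{def}. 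Hence $X_t^n(f)-b^2\int_0^t X_s^n(\Delta f)\,ds$ is asymptotically a martingale. This is the step I expect to be hardest: since $\rho_n=\tfrac nb-1\to\infty$, the compactness argument behind the usual proof collapses, so I would use the quantitative scheme of \cite{GonJar1} driven by the spectral gap bound of \cite{Mor}, carefully tracking its dependence on the number of particles through the variance estimates of Section \ref{s3}.

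It remains to prove tightness and to pass to the limit. Tightness of $\{X_t^n\}$ in the uniform topology I would obtain from the variance bounds for additive functionals of Section \ref{s3}, which control the drift, together with the Burkholder--Davis--Gundy and Doob inequalities applied to $M_t^n(f)$. Computing the predictable quadratic variation of $M_t^n(f)$ from the jump rates and replacing $g(\eta_s^n(x))$ by its mean $\lambda_n$, the orthogonality of the bond martingales yields $\langle M^n(f)\rangle_t \to 2t\int_0^\infty f(x)^2\,dx$ (using $\lambda_n\to 1$), which is exactly the quadratic variation required by the definition of martingale solution. Along any convergent subsequence the limit therefore satisfies the martingale characterization of \eqref{SHE}; identifying the initial datum and invoking the uniqueness of martingale solutions from \cite{DemTsa} pins down the limit and upgrades subsequential to full convergence, establishing Theorem \ref{t1}.
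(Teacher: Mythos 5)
Your proposal is correct and follows essentially the same route as the paper: the same martingale decomposition with summation by parts, the quantitative Boltzmann--Gibbs principle of \cite{GonJar1} adapted via the spectral gap of \cite{Mor} with the linearization coefficient $\bar g'(\rho_n)=b^2/n^2$, the continuity relation plus the Neumann condition $f'(0)=0$ and the auxiliary field $F$ to close the equation, and uniqueness from \cite{DemTsa}. The only cosmetic difference is in the tightness step, where the paper uses Whitt's martingale FCLT and the Kolmogorov--Centsov criterion rather than BDG/Doob, but this does not change the substance of the argument.
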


As an application of this theorem we will also prove a central limit theorem for the current $J_t^n(0)$:

\begin{theorem}
\label{fBM}
The process $\frac{1}{n^{3/2}} J_t^n(0)$ converges in the sense of finite-dimensional distributions to a fractional Brownian motion of Hurst index $H =\frac{1}{4}$.
\end{theorem}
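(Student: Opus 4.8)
The plan is to extract the boundary current $n^{-3/2}J_t^n(0)$ from the current fluctuation field $X_t^n$ by testing \eqref{def} against functions that approximate a unit mass at the origin, and then to invoke Theorem \ref{t1}. The starting point is the discrete continuity equation $\eta_t^n(x)-\eta_0^n(x) = J_t^n(x-1)-J_t^n(x)$, valid for $x\ge1$, which upon iteration gives $J_t^n(x) = J_t^n(0) - \sum_{y=1}^x(\eta_t^n(y)-\eta_0^n(y))$. Inserting this into the current part of \eqref{def}, performing an Abel summation and using $F(x) = -\int_x^\infty f$, one obtains for every $f\in\mc C_c^\infty([0,\infty))$ the identity
\[
X_t^n(f) = \frac{J_t^n(0)}{n^{3/2}}\,I_f^n + \mc Y_t^n(F) + R_t^n(f),\qquad I_f^n := \frac1n\sum_{x\in\bb N_0}f\big(\tfrac xn\big),
\]
where $\mc Y_t^n(g) := n^{-3/2}\sum_{x\in\bb N}(\eta_t^n(x)-\rho_n)g(x/n)$ is the density fluctuation field and $R_t^n(f)$ collects Riemann-sum errors with $R_t^n(f)\to0$ as $n\to\infty$ for each fixed $f$; note also that $I_f^n\to\int_0^\infty f$.

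Next I would choose a family $f_\epsilon\ge0$ with $\int_0^\infty f_\epsilon = 1$ and $\supp f_\epsilon\subseteq[0,\epsilon]$, so that $F_\epsilon = -\int_\cdot^\infty f_\epsilon$ is supported in $[0,\epsilon]$ with $|F_\epsilon|\le1$. Since $\eta_t^n\sim\mu^n$ for every fixed $t$ and $\Var_{\mu^n}(\eta(x)) = \lambda_n(1-\lambda_n)^{-2}\le n^2/b^2$, a direct second-moment computation gives $\bb E_n[\mc Y_t^n(F_\epsilon)^2]\le C\epsilon$ for all $t$ and all large $n$. The identity then reads
\[
\frac{J_t^n(0)}{n^{3/2}}\,I_{f_\epsilon}^n = X_t^n(f_\epsilon) - \mc Y_t^n(F_\epsilon) - R_t^n(f_\epsilon),
\]
with $\mc Y_t^n(F_\epsilon)$ of order $\sqrt\epsilon$ in $L^2$ uniformly in large $n$ and $R_t^n(f_\epsilon)\to0$. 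Fixing $\epsilon$ and letting $n\to\infty$, Theorem \ref{t1} gives convergence of the finite-dimensional distributions of $X_\cdot^n(f_\epsilon)$ to those of a Gaussian process, hence of $n^{-3/2}J_\cdot^n(0)$ up to an $O(\sqrt\epsilon)$ error; letting $\epsilon\to0$ then produces a well-defined centered Gaussian limit $\{Z_t\}_{t\ge0}$, the \emph{singular} boundary linear functional of the limit field referred to in the introduction. Centering comes from the reversibility of $\mu^n$, which forces $\bb E_n[J_t^n(0)]=0$, and $Z_0=0$ is immediate.

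It remains to identify the law of $Z$, for which I would use three structural facts rather than compute its full multi-time covariance by hand. First, $Z$ is centered Gaussian with $Z_0=0$. Second, $Z$ has stationary increments: because $\{\eta_t^n\}$ starts from the invariant measure $\mu^n$, the increment process $\{J_{t+s}^n(0)-J_s^n(0)\}_t$ has the same law as $\{J_t^n(0)\}_t$ for every $s\ge0$, and this passes to the limit. Third, the one-time variance satisfies $\Var(Z_t)=c\,t^{1/2}$. For the last point, write $Z_t = X_t(f_\epsilon)-\mc Y_t(F_\epsilon)$ for the limit field $X_t$ and let $\epsilon\to0$ in
\[
\Var(Z_t) = \Var\big(X_t(f_\epsilon)\big) - 2\,\mathrm{Cov}\big(Z_t,\mc Y_t(F_\epsilon)\big) + \Var\big(\mc Y_t(F_\epsilon)\big);
\]
the last term vanishes and the middle one does too by Cauchy--Schwarz, while $\Var(X_t(f_\epsilon))$ converges to the boundary variance of the martingale solution of \eqref{SHE}, whose mild representation uses the Neumann kernel $G_s(0,y) = 2g_s(y)$ with $g_s(y) = (4\pi b^2 s)^{-1/2}\exp(-y^2/4b^2s)$. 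This yields
\[
\Var(Z_t) = 2\int_0^t\!\!\int_0^\infty G_s(0,y)^2\,dy\,ds = \frac{2\sqrt2}{\sqrt\pi\,b}\,\sqrt t .
\]
By stationary increments $\Var(Z_t-Z_s)=\Var(Z_{|t-s|})=c\sqrt{|t-s|}$, so polarization gives $\mathrm{Cov}(Z_t,Z_s)=\tfrac c2(\sqrt t+\sqrt s-\sqrt{|t-s|})$; a centered Gaussian process with these covariances is a fractional Brownian motion of Hurst index $H=\tfrac14$, proving the theorem. I emphasize that only one-time variances of $Z$ enter, together with stationary increments, so this avoids asserting any (false) pathwise identification of $Z$ with the naive pointwise boundary value of the field.

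The main obstacle is precisely the singular nature of this boundary functional: evaluating a distribution-valued field at $x=0$ is not a continuous operation, so the crux is the uniform (in $t$ and in large $n$) control of the remainder $\mc Y_t^n(F_\epsilon)$, which legitimizes the double limit $n\to\infty$, $\epsilon\to0$ and guarantees that $Z$ does not depend on the approximating family $\{f_\epsilon\}$. Once this uniform smallness is established, the fractional Brownian motion structure follows from the soft characterization above together with the elementary heat-kernel variance computation.
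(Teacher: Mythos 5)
Your reduction of $n^{-3/2}J_t^n(0)$ to the field tested against an approximate identity at the origin is essentially the paper's argument: the paper takes $\varphi_\epsilon=\epsilon^{-1}\varphi(\cdot/\epsilon)$, applies the continuity relation \eqref{cont} with the primitive $h_\epsilon$, and obtains the same uniform bound $\bb E_n\big[(n^{-3/2}J_t^n(0)-X_t^n(\varphi_\epsilon))^2\big]\leq C\epsilon$, then sends $n\to\infty$ followed by $\epsilon\to 0$. Where you genuinely depart from the paper is the identification of $\lim_{\epsilon\to 0}X_t(\varphi_\epsilon)$: the paper simply cites \cite{DemTsa} for the existence of this limit and for its being a fractional Brownian motion with $H=\frac14$, whereas you give a self-contained soft argument (centered Gaussian limit, stationary increments inherited from stationarity of $\mu^n$, one-time variance proportional to $t^{1/2}$, then polarization). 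This is a legitimate and more informative route, and the three structural inputs are correctly justified; in particular the stationary-increments step and the centering via detailed balance at the source are fine.

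There is, however, one substantive error in your variance computation. The formula $\Var(Z_t)=2\int_0^t\int_0^\infty G_s(0,y)^2\,dy\,ds$ is only the It\^o-isometry contribution of the driving noise, i.e.\ it presumes $X_0=0$. But $X_0^n(f)=n^{-3/2}\sum_x(\eta_0^n(x)-\rho_n)F(x/n)$ converges to a nondegenerate Gaussian of variance $b^{-2}\int F^2$ (this is computed in the tightness section), so the limit field is the \emph{stationary} solution of \eqref{SHE}: $X_0=b^{-1}B$ with $B$ a Brownian motion in the space variable, independent of $\dot{\mc W}$. Consequently $\Var(Z_t)$ carries an additional independent term $b^{-2}\iint G_t(0,y)\,G_t(0,y')\,(y\wedge y')\,dy\,dy'$, which by the exact Gaussian scaling of the kernel is also a constant times $\sqrt t$. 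Your stated constant $2\sqrt 2/(\sqrt\pi\, b)$ is therefore not the variance of $Z_t$. Fortunately the extra term preserves both the $t^{1/2}$ scaling and the stationarity of increments, so the polarization step and the conclusion $H=\frac14$ are unaffected (Theorem \ref{fBM} does not fix the multiplicative constant); you should either add this initial-condition term explicitly or simply argue that $\Var(Z_t)=c\sqrt t$ by the parabolic self-similarity of the stationary solution, without computing $c$.
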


\section{On additive functionals of Markov processes}
\label{s3}
\subsection{Kipnis-Varadhan inequality}

The main goal of this section is to prove some estimates on the variance of additive functionals of the process $\{\eta_t^n; t \geq 0\}$. In particular we want to prove the analogous of \cite[Prop.~3.7]{GonJar1}, see Proposition \ref{p1}. The proof is almost identical to the proof of \cite[Prop.~3.7]{GonJar1};  we will copy the exposition of \cite[Section 3.2]{GonJar1} and we will explain the differences on the go.

For $f,h \in L^2(\mu^n)$ we write $\<f,h\> = \int fh d\mu^n$. we will omit the dependence in $n$ of $\<\cdot,\cdot\>$, as well as many other quantities. Let $f \in L^2(\mu^n)$ such that $\int f d\mu^n=0$. The $H_{-1}$-norm of $f$ is defined as
\begin{equation}
\label{varh1}
\|f\|_{-1}^2 = \sup_{h} \big\{ 2\<f,h\> - \< h,-L h\>\big\},
\end{equation}
where the supremum runs over local functions in $L^2(\mu^n)$. The importance of the $H_{-1}$-norm is shown by the following inequality:

\begin{proposition}[Kipnis-Varadhan inequality \cite{KipVar,ChaLanOll}]
\label{KV}
For any $T \geq 0$,
\[
\bb E_n \Big[\sup_{0\leq t \leq T} \Big( \int_0^t f(\eta_s^n) ds \Big)^2 \Big] \leq \frac{18 T}{n^4} \|f\|_{-1}^2.
\]
\end{proposition}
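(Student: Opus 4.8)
The plan is to obtain Proposition~\ref{KV} as a direct consequence of the classical Kipnis--Varadhan inequality for reversible Markov processes \cite{KipVar, ChaLanOll}, the only essential new ingredient being the bookkeeping forced by the diffusive time scaling. Since $\{\eta_t^n;t\geq 0\}=\{\eta_{tn^4};t\geq 0\}$, the process $\eta_t^n$ is generated by $\mc{L}_n:=n^4 L$, which is self-adjoint and nonpositive on $L^2(\mu^n)$ because $\mu^n=\mu_{\lambda_n}$ is reversible for $L$. The standard inequality then applies verbatim to $\mc{L}_n$ and gives
\[
\bb E_n\Big[\sup_{0\leq t\leq T}\Big(\int_0^t f(\eta_s^n)\,ds\Big)^2\Big]\;\leq\;18\,T\,\|f\|_{-1,\mc{L}_n}^2,
\]
where $\|f\|_{-1,\mc{L}_n}^2=\sup_h\{2\<f,h\>-\<h,-\mc{L}_n h\>\}$, the supremum running over local functions. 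The numerical constant $18$ is exactly the one furnished by the cited version and is carried through unchanged.

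It remains to relate $\|f\|_{-1,\mc{L}_n}$ to the norm $\|f\|_{-1}$ of \eqref{varh1}. From $-\mc{L}_n=n^4(-L)$ and the variational identity $\|f\|_{-1,\mc{L}_n}^2=\<f,(-\mc{L}_n)^{-1}f\>$ one reads off $\|f\|_{-1,\mc{L}_n}^2=n^{-4}\<f,(-L)^{-1}f\>=n^{-4}\|f\|_{-1}^2$. Equivalently, and without invoking the inverse operator, replacing $h$ by $ch$ in $2\<f,h\>-n^4\<h,-Lh\>$ and optimizing over $c\in\bb R$ turns the supremum into $n^{-4}\sup_h \<f,h\>^2/\<h,-Lh\>=n^{-4}\|f\|_{-1}^2$. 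Substituting this into the displayed bound yields precisely $\tfrac{18T}{n^4}\|f\|_{-1}^2$, which is the claim.

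Should a self-contained argument for the underlying inequality be preferred, I would run the usual resolvent scheme. For $\alpha>0$ let $h_\alpha$ solve $(\alpha-\mc{L}_n)h_\alpha=f$; writing $f=\alpha h_\alpha-\mc{L}_n h_\alpha$ and applying Dynkin's formula shows that $M_t:=h_\alpha(\eta_t^n)-h_\alpha(\eta_0^n)-\int_0^t \mc{L}_n h_\alpha(\eta_s^n)\,ds$ is a martingale, whence
\[
\int_0^t f(\eta_s^n)\,ds=\alpha\int_0^t h_\alpha(\eta_s^n)\,ds-\big(h_\alpha(\eta_t^n)-h_\alpha(\eta_0^n)\big)+M_t.
\]
One then estimates the running maximum of each term: the martingale through Doob's $L^2$ maximal inequality and the stationary identity $\bb E_n[\<M\>_t]=2t\,\<h_\alpha,-\mc{L}_n h_\alpha\>$, and the remaining two terms through $\alpha^2\|h_\alpha\|_2^2$ and $\|h_\alpha\|_2^2$. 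The elementary bounds $\alpha\|h_\alpha\|_2^2\leq\|f\|_{-1,\mc{L}_n}^2$ and $\<h_\alpha,-\mc{L}_n h_\alpha\>\leq\|f\|_{-1,\mc{L}_n}^2$, both immediate from the resolvent equation, let one send $\alpha\to 0$ and absorb all numerical factors into the constant $18$.

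I expect the only delicate point in the present setting to be the functional-analytic framework behind these manipulations, rather than the martingale estimates. One must know that $L$ closes to a self-adjoint operator on $L^2(\mu^n)$ despite the unbounded occupation numbers and the non-conservative creation/annihilation at the origin, that the resolvent equation $(\alpha-\mc{L}_n)h_\alpha=f$ is solvable in $L^2(\mu^n)$, and above all that the supremum over local functions in \eqref{varh1} genuinely computes the full $H_{-1}$-norm, i.e. that local functions are dense in the associated $H_1$ space. These facts are what make the Kipnis--Varadhan machinery applicable here; granting them (they follow from Andjel's construction together with the reversibility of $\mu^n$), the estimate reduces to the classical argument with all the $n$-dependence isolated in the scalar factor computed above.
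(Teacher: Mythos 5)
Your proposal is correct and takes essentially the same route as the paper, which proves nothing beyond citing the classical inequality of \cite{KipVar,ChaLanOll}; the one piece of genuine content you supply --- that the time change $\eta_t^n=\eta_{tn^4}$ replaces $L$ by $n^4L$ and hence rescales the variational norm by exactly $n^{-4}$, via the substitution $h\mapsto ch$ --- is the correct bookkeeping that turns the cited bound into the stated one. Your resolvent sketch and the remarks on the functional-analytic setup are standard and consistent with how the paper treats the matter.
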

This inequality, in the form presented here was proved in \cite{ChaLanOll} following the proof of a slightly different inequality proved in \cite{KipVar}.  

Following \cite{GonJar1}, a very efficient way to estimate the $H_{-1}$-norm of a given function $f$ can be achieved by using the so-called {\em spectral gap inequality}.  In order to state this inequality we need some definitions. For $\ell \in \bb N$ and $x \in \bb N_0$ define $\Lambda_\ell(x) = \{1,\dots,\ell\}$ and $\Omega_\ell(x) = \bb N_0^{\Lambda_\ell(x)}$. For $k \geq 0$ define
\[
\Omega_{k,\ell}(x) = \big\{ \eta \in \Omega_\ell; \sum_{i=1}^\ell \eta(x+i)=k\big\}.
\]
We will write $\Lambda_\ell$, $\Omega_\ell$ and $\Omega_{k,\ell}$ instead of $\Lambda_\ell(0)$, $\Omega_\ell(0)$ and $\Omega_{k,\ell}(0)$ respectively. 
Let $\mu_{k,\ell}$ the uniform measure on $\Omega_{k,\ell}$ and notice that $\mu_{k,\ell}$ is also the restriction of $\mu_\lambda$ to $\Omega_{k,\ell}$. Let us denote by $\<\cdot,\cdot\>_{k,\ell}$ the inner product on $L^2(\mu_{k,\ell})$. For $f \in L^2(\mu^n)$ we denote by $\Var(f)$ the variance of $f$ with respect to $\mu^n$. For $f:\Omega_{k,\ell} \to \bb R$ let $L_\ell f: \Omega_{k,\ell} \to \bb R$ be given by
\[
L_\ell f(\eta) = \sum_{\substack{x, y \in \Lambda_\ell\\ |y-x|=1}} g(\eta(x)) \big(f(\eta^{x,y})-f(\eta)\big).
\] 
We have the following proposition:

\begin{proposition}[Spectral gap inequality \cite{Mor}]
\label{SG}
There exists a universal constant $\kappa_0$ such that 
\[
\<f,f\> \leq \kappa_0(\ell+k)^2 \<f,-L_\ell f\>_{k,\ell}
\]
for any $k, \ell \geq 0$ and any function $f:\Omega_{k,\ell} \to \bb R$ such that $\int f d\mu_{k,\ell}=0$.

\end{proposition}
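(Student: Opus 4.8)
The plan is to reduce the statement to the spectral gap of the one-dimensional symmetric simple exclusion process (SSEP) through the classical bijection between constant-rate zero-range dynamics and exclusion dynamics, and then to invoke the known linear-algebraic bound for the SSEP gap, which is uniform in the number of particles.

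First I would set up the bijection at the level of configurations. A configuration $\eta \in \Omega_{k,\ell}$ is a composition of $k$ into the $\ell$ nonnegative parts $\eta(1),\dots,\eta(\ell)$. Encode it as a word over an alphabet of $k$ \emph{particles} and $\ell-1$ \emph{separators}, by writing $\eta(1)$ particles, a separator, $\eta(2)$ particles, a separator, and so on, ending with $\eta(\ell)$ particles. This produces an exclusion configuration $\sigma = \Phi(\eta)$ on the segment $\{1,\dots,N\}$ with $N = k+\ell-1$ sites and exactly $k$ occupied sites. The map $\Phi$ is a bijection from $\Omega_{k,\ell}$ onto the set of exclusion configurations with $k$ particles on $N$ sites (both have cardinality $\binom{k+\ell-1}{k}$), and since both $\mu_{k,\ell}$ and the canonical SSEP measure are uniform on their respective state spaces, $\Phi$ pushes $\mu_{k,\ell}$ forward to the canonical exclusion measure. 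In particular $\Phi$ induces a unitary isomorphism between $L^2(\mu_{k,\ell})$ and the corresponding $L^2$ space of the SSEP.

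Second, I would check that $\Phi$ intertwines the two dynamics. A zero-range move carrying a particle from site $i$ to site $i+1$ corresponds under $\Phi$ exactly to swapping the particle immediately to the left of the $i$-th separator with that separator, which is a nearest-neighbor particle--hole exchange in the exclusion picture. The move is allowed precisely when $\eta(i)\geq 1$, that is when $g(\eta(i))=1$, which is exactly the condition for the corresponding exclusion swap to be non-trivial, and all rates equal $1$. Running through all directed bonds one sees that $L_\ell$ is conjugated by $\Phi$ to the SSEP generator on $\{1,\dots,N\}$, so that the Dirichlet forms coincide: $\langle f, -L_\ell f\rangle_{k,\ell}$ equals the exclusion Dirichlet form of $f\circ\Phi^{-1}$, while $\langle f,f\rangle_{k,\ell}$ equals its variance under the canonical measure. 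Consequently the spectral gap of $-L_\ell$ on $\Omega_{k,\ell}$ is exactly the spectral gap of the SSEP with $k$ particles on $N=k+\ell-1$ sites. The only care needed here is the bookkeeping of rates and boundary separators; this is routine but must be done carefully so that no combinatorial factor is lost.

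Finally, I would invoke the spectral gap of the SSEP on a segment. By the resolution of the Aldous spectral gap conjecture (or, in this one-dimensional case, by older explicit results), the gap of the exclusion process on $\{1,\dots,N\}$ does not depend on the number of particles and coincides with the gap of a single symmetric nearest-neighbor random walk on $\{1,\dots,N\}$, which equals $2(1-\cos(\pi/N)) \geq c N^{-2}$ for a universal constant $c>0$. With $N = k+\ell-1 \leq k+\ell$ this yields $\langle f,f\rangle_{k,\ell} \leq c^{-1}(k+\ell)^2 \langle f,-L_\ell f\rangle_{k,\ell}$, which is the claim with $\kappa_0 = c^{-1}$. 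The one genuinely nontrivial ingredient is this uniform-in-density bound for the SSEP gap; it is what makes the statement quantitative in both $\ell$ and $k$. If one prefers a self-contained argument rather than citing the interchange-process result, one can instead establish the SSEP gap on the path by the recursive martingale (Lu--Yau) method, and that is where the real work would lie.
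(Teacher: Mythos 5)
Your argument is correct, and it is precisely the route the paper itself points to without writing it out: the stars-and-bars bijection sending a zero-range configuration in $\Omega_{k,\ell}$ to an exclusion configuration with $k$ particles on $k+\ell-1$ sites, under which the uniform measures and the Dirichlet forms match and the gap reduces to the particle-number-independent SSEP gap of order $(k+\ell)^{-2}$. The paper's primary justification is the citation to Morris's complete-graph result extended by the path lemma, but it explicitly remarks that in one dimension "a proof can be obtained by coupling with the exclusion process," which is exactly what you have carried out.
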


This proposition was proved in \cite{Mor} for the zero-range process evolving on the complete graph and extended to finite subsets of $\bb Z^d$ using the so called {\em path lemma}. In our one-dimensional situation, a proof can be obtained by coupling with the exclusion process.

For $x \in \bb N_0$ and $\ell \in \bb N$ define $\Lambda_{\ell}(x) = \{x+1,\dots,x+\ell\}$ and
\[
\eta^\ell(x) = \frac{1}{\ell} \sum_{i=1}^\ell \eta(x+i).
\]
Notice that objects like $\mu_{k,\ell}$ or $L_\ell$ can be defined in $\Lambda_\ell(x)$ in a canonical way.
For $f:\Omega \to \bb R$ local such that $\supp(f) \subseteq \Lambda_\ell(x)$ define $\psi_f^\ell: \Omega \to \bb R$ as
$\psi_f^\ell(\eta) = E[f| \eta^\ell(x)]$. Here and in what follows, all conditional expectations are taken with respect to the measure $\mu^n$. Since $\supp(f) \subseteq \Lambda_\ell(x)$, the function $\psi_f^\ell$ does not depend on $n$. We have the following proposition:

\begin{proposition}
Let $f: \Omega \to \bb R$ be a local function such that $\supp(f) \subseteq \Lambda_\ell(x)$ for some $x \in \bb N_0$, $\ell \in \bb N$. Then
\[
\|f-\psi_f^\ell\|_{-1}^2 \leq \kappa_0 \ell^2 \Var\big((1\plus \eta^\ell(x))(f-\psi_f^\ell)\big).
\]
\end{proposition}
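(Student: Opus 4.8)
The plan is to combine the variational formula \eqref{varh1} for the $H_{-1}$-norm with the spectral gap inequality (Proposition \ref{SG}), following the scheme of \cite[Section 3.2]{GonJar1}. Write $\tilde f = f \minus \psi_f^\ell$ for brevity. The two properties of $\tilde f$ that drive the argument are that $\supp(\tilde f) \subseteq \Lambda_\ell(x)$, and that $\tilde f$ has mean zero with respect to $\mu_{k,\ell}$ for \emph{every} value of the particle count $k = \ell\,\eta^\ell(x)$ in the box, since $\psi_f^\ell$ is exactly the conditional expectation of $f$ given this count. I would then unfold $\|\tilde f\|_{-1}^2 = \sup_h\{2\<\tilde f,h\> \minus \<h,-Lh\>\}$ and estimate the linear term against the full Dirichlet form.

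Fix a local $h$ and let $\mc G$ be the $\sigma$-algebra generated by the count $k$ in $\Lambda_\ell(x)$ together with the configuration outside $\Lambda_\ell(x)$. Under the product measure $\mu^n$ the box configuration is independent of the exterior, and conditionally on $\mc G$ it is distributed as $\mu_{k,\ell}$. Since $E[\tilde f\mid \mc G]=0$, the inner product decomposes as $\<\tilde f,h\> = E\big[\<\tilde f,h\>_{k,\ell}\big]$, and the conditional inner product is unchanged if $h$ is replaced by $h^* = h - E[h\mid\mc G]$, a $\mu_{k,\ell}$-mean-zero function on $\Omega_{k,\ell}$. Applying Cauchy--Schwarz conditionally on $\mc G$ and then the spectral gap inequality to $h^*$ gives
\[
\<\tilde f,h\>_{k,\ell} \leq \sqrt{\Var_{k,\ell}(\tilde f)}\,\sqrt{\Var_{k,\ell}(h^*)} \leq \sqrt{\kappa_0}\,(\ell\plus k)\sqrt{\Var_{k,\ell}(\tilde f)}\,\sqrt{\<h,-L_\ell h\>_{k,\ell}},
\]
where I have used that $L_\ell$ annihilates the $\mc G$-measurable constant $E[h\mid\mc G]$, so $\<h^*,-L_\ell h^*\>_{k,\ell} = \<h,-L_\ell h\>_{k,\ell}$. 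Using $2ab \leq a^2 \plus b^2$ and taking expectation over $\mc G$ yields
\[
2\<\tilde f,h\> \leq \kappa_0\, E\big[(\ell\plus k)^2 \Var_{k,\ell}(\tilde f)\big] + E\big[\<h,-L_\ell h\>_{k,\ell}\big].
\]

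The decisive step is the comparison $E\big[\<h,-L_\ell h\>_{k,\ell}\big] \leq \<h,-Lh\>$. Since $\mu^n$ is reversible, the full Dirichlet form is a sum of nonnegative bond terms $\int g(\eta(z))\big(h(\eta^{z,w})\minus h(\eta)\big)^2 d\mu^n$ over neighboring pairs in $\bb N_0$, together with the source term at the origin, whereas $\<h,-L_\ell h\>_{k,\ell}$ collects only the bonds internal to $\Lambda_\ell(x)$. Conditioning on the box count and integrating the exterior back out reproduces precisely those internal-bond terms computed under $\mu^n$, which are dominated by the full sum. Subtracting $\<h,-Lh\>$ and taking the supremum over $h$ then leaves $\|\tilde f\|_{-1}^2 \leq \kappa_0\, E\big[(\ell\plus k)^2 \Var_{k,\ell}(\tilde f)\big]$. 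Finally I would recognize the right-hand side: on $\Omega_{k,\ell}$ one has $1\plus\eta^\ell(x) = (\ell\plus k)/\ell$, and $(1\plus\eta^\ell(x))\tilde f$ has zero $\mu^n$-mean (being $\mc G$-conditionally mean zero), so its variance equals its second moment and
\[
\ell^2\,\Var\big((1\plus\eta^\ell(x))\tilde f\big) = E\big[(\ell\plus k)^2\Var_{k,\ell}(\tilde f)\big],
\]
which is the asserted bound. The only point requiring genuine care is the Dirichlet-form comparison, namely checking that averaging the microcanonical form over $\mc G$ recovers exactly the internal-bond part of the grand-canonical Dirichlet form; the remainder is bookkeeping with Cauchy--Schwarz and Young's inequality.
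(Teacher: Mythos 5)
Your argument is correct and is exactly the route the paper indicates (and delegates to \cite[Prop.~3.5]{GonJar1}): decompose $\<\tilde f,h\>$ over the particle-number sectors $\Omega_{k,\ell}$, apply the spectral gap inequality of Proposition \ref{SG} sector by sector so that the non-uniform constant $(\ell\plus k)^2=\ell^2(1\plus\eta^\ell(x))^2$ gets absorbed into the weighted variance, and dominate the averaged box Dirichlet form by the full one. All the supporting points you flag (replacing $h$ by $h^*$, the mean-zero property of $(1\plus\eta^\ell(x))\tilde f$, and the internal-bond comparison) check out.
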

This proposition can be proved as Proposition 3.5 in \cite{GonJar1}. We only need to replace the inner products of the form $\<f,h\>$ by
\[
\sum_{k \geq 0} p(k,\ell) \< f,h\>_{k,\ell},
\]
where $p(k,\ell) = \mu^n(\eta^\ell(x) =\frac{k}{\ell})$ and make use of the spectral gap inequality on each subspace $\{\eta^\ell(x) = k\}$. This non uniformity introduces the weighting function $(1+\eta^\ell(x))^2$ into the variance above. Taking these considerations into account, the proof of this proposition can be easily adapted from \cite{GonJar1}, so we omit it.

The next proposition states that functions with supports contained on disjoint intervals are roughly orthogonal with respect to the $H_{-1}$-norm:

\begin{proposition}
\label{port}
Let $m \in \bb N$ be given. Take a sequence $0 \leq x_0 < \dots < x_m \in \bb N_0$ and let $\{f_i; i =1,\dots,m\}$ be a sequence of local functions such that $\supp(f_i) \subseteq \{x_{i-1}+1,\dots,x_i\}$ for any $i$. Define $\ell_i = x_i-x_{i-1}$. Then,
\[
\big\|f_1+\dots+f_m\big\|_{-1}^2 \leq \kappa_0 \sum_{i=1}^m \ell_i^2 \Var\big((1\plus \eta^{\ell_i}(x_{i-1}))(f_i\minus \psi_{f_i}^{\ell_i})\big).
\]
\end{proposition}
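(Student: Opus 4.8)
The plan is to derive this near-orthogonality estimate directly from the variational formula \eqref{varh1}, reducing it to the single-block estimate of the preceding proposition by localizing the test function to each block and keeping only the bonds internal to the blocks in the Dirichlet form. Set $B_i = \{x_{i-1}\plus 1,\dots,x_i\}$ and $g_i = f_i \minus \psi_{f_i}^{\ell_i}$, so that $\supp(g_i)\subseteq B_i$ and, by construction, $g_i$ is centered on each density shell, i.e. $E[g_i \mid \eta^{\ell_i}(x_{i-1})]=0$; these conditionally centered functions are exactly the ones appearing on the right-hand side. Starting from
\[
\big\|{\textstyle\sum_i} g_i\big\|_{-1}^2 = \sup_h \Big\{ 2\sum_{i=1}^m \<g_i,h\> \minus \<h,-Lh\>\Big\},
\]
I would first use that $\mu^n$ is a product measure and that $g_i$ depends only on the coordinates of $B_i$: writing $\pi_i h = E[h \mid \eta(y),\, y\in B_i]$ for the conditional expectation of $h$ onto block $i$, one has $\<g_i,h\> = \<g_i,\pi_i h\>$ for each $i$.

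Next I would bound the Dirichlet form from below by the contributions of the internal bonds of each block. Let $D_i(h)$ denote the part of $\<h,-Lh\>$ coming from the bonds $\{y,y\plus 1\}$ with $y,y\plus 1\in B_i$. These bond-sets are pairwise disjoint -- the bond $\{x_i,x_i\plus 1\}$ joining two consecutive blocks belongs to none of them -- and all remaining bonds contribute non-negatively, so $\<h,-Lh\>\geq \sum_i D_i(h)$. Moreover the rates $g(\eta(y))$ of the internal bonds of $B_i$ are measurable with respect to the coordinates of $B_i$, and these bonds do not modify the coordinates outside $B_i$, so the conditional Jensen inequality gives $D_i(h)\geq D_i(\pi_i h)$. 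Substituting both bounds and optimizing over each block separately (again using the product structure, so that the blocks decouple) yields
\[
\big\|{\textstyle\sum_i} g_i\big\|_{-1}^2 \;\leq\; \sum_{i=1}^m \sup_{h_i}\big\{ 2\<g_i,h_i\>\minus D_i(h_i)\big\},
\]
the $i$-th supremum running over functions of the coordinates of $B_i$, with $D_i$ the Dirichlet form of the block generator $L_{\ell_i}$ attached to $B_i$.

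Finally, each term on the right is the $H_{-1}$-norm of $g_i$ for the closed block dynamics $L_{\ell_i}$, which is precisely the quantity estimated in the proof of the preceding proposition: decomposing over the shells $\{\eta^{\ell_i}(x_{i-1})=k/\ell_i\}$, invoking the spectral gap inequality of Proposition \ref{SG} on each shell, and using that $g_i$ is centered on every shell, I would obtain
\[
\sup_{h_i}\big\{ 2\<g_i,h_i\>\minus D_i(h_i)\big\} \;\leq\; \kappa_0\, \ell_i^2\, \Var\big((1\plus\eta^{\ell_i}(x_{i-1}))\, g_i\big),
\]
and summing over $i$ closes the argument.

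The step I expect to require the most care is the Dirichlet-form comparison: one must retain only the internal bonds, both to avoid double-counting the bonds shared by adjacent blocks and because it is exactly the passage to internal (closed) bonds -- which freeze the particle number of each block -- that makes each block supremum tractable via the spectral gap. The conditional centering built into $g_i$ is what guarantees finiteness of these block suprema, while the weight $(1\plus\eta^{\ell_i})^2$ is produced by the $(\ell\plus k)^2$ factor of Proposition \ref{SG} through $(\ell\plus k)^2=\ell^2(1\plus k/\ell)^2$ on the shell.
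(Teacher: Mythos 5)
Your proof is correct and follows essentially the same route as the argument the paper delegates to \cite{GonJar1} (Proposition 3.6 there): the variational formula, projection of the test function onto each block by conditional expectation, discarding the inter-block bonds in the Dirichlet form, and then the shell-wise spectral gap with the weight $(\ell+k)^2=\ell^2(1+k/\ell)^2$ producing the factor $(1+\eta^{\ell_i}(x_{i-1}))^2$. Note that what you actually bound is $\|\sum_i(f_i-\psi_{f_i}^{\ell_i})\|_{-1}^2$, which is the sensible reading of the statement and the only version used later (in Proposition \ref{p1} one assumes $\psi_{f_i}^{\ell_i}\equiv 0$, so the two coincide there).
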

Again, the proof of this proposition is a simple adaptation of the proof of Proposition 3.6 in \cite{GonJar1}, so we omit it.
Putting together the estimates of Propositions \ref{KV} and \ref{port} we obtain the following estimate:

\begin{proposition}
\label{p1}
Let $\{f_i; i=1,\dots,m\}$ like in Proposition \ref{port}. Assume in addition that $\psi_{f_i}^{\ell_i}\equiv 0$ for any $i$. Then
\[
\bb E_n \Big[ \sup_{0\leq t \leq T}\Big( \int_0^t \sum_{i=1}^m f_i(\eta_s^n) ds \Big)^2 \Big] 
		\leq \frac{18 \kappa_0 T}{n^4} \sum_{i=1}^m \ell_i^2 \Var((1\plus \eta^{\ell_i}(x_{i-1}))f_i\big)
\]
for any $T \geq 0$.
\end{proposition}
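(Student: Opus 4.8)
The plan is to combine the two key tools already at hand: the Kipnis--Varadhan inequality (Proposition \ref{KV}) and the near-orthogonality estimate in the $H_{-1}$-norm (Proposition \ref{port}). The strategy is entirely transparent: Proposition \ref{KV} reduces the supremum over time of the squared additive functional to the $H_{-1}$-norm of the integrand, and Proposition \ref{port} controls that $H_{-1}$-norm by the sum of weighted variances. The only subtlety is checking that the hypotheses of both propositions apply to the function $f = f_1 + \dots + f_m$, and that the extra assumption $\psi_{f_i}^{\ell_i} \equiv 0$ allows the projections appearing in Proposition \ref{port} to drop out cleanly.

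First I would set $f = \sum_{i=1}^m f_i$ and observe that, since each $f_i$ is local and $\psi_{f_i}^{\ell_i} = E[f_i \mid \eta^{\ell_i}(x_{i-1})] \equiv 0$, in particular $\int f_i \, d\mu^n = E[\psi_{f_i}^{\ell_i}] = 0$, so $\int f \, d\mu^n = 0$ and the $H_{-1}$-norm $\|f\|_{-1}^2$ in \eqref{varh1} is well defined. Applying Proposition \ref{KV} to $f$ directly yields
\[
\bb E_n \Big[ \sup_{0 \leq t \leq T} \Big( \int_0^t \sum_{i=1}^m f_i(\eta_s^n)\, ds \Big)^2 \Big]
    \leq \frac{18 T}{n^4} \big\| f_1 + \dots + f_m \big\|_{-1}^2 .
\]

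Next I would invoke Proposition \ref{port} to bound $\|f_1 + \dots + f_m\|_{-1}^2$. The supports satisfy $\supp(f_i) \subseteq \{x_{i-1}+1, \dots, x_i\}$ with $\ell_i = x_i - x_{i-1}$ exactly as required, so
\[
\big\| f_1 + \dots + f_m \big\|_{-1}^2
    \leq \kappa_0 \sum_{i=1}^m \ell_i^2 \, \Var\big( (1 \plus \eta^{\ell_i}(x_{i-1}))(f_i \minus \psi_{f_i}^{\ell_i}) \big).
\]
Here the hypothesis $\psi_{f_i}^{\ell_i} \equiv 0$ lets me replace $f_i \minus \psi_{f_i}^{\ell_i}$ by $f_i$ throughout, giving the variance term $\Var\big((1 \plus \eta^{\ell_i}(x_{i-1})) f_i\big)$ that appears in the statement. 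Chaining this with the previous display produces exactly the claimed bound with constant $18 \kappa_0 T / n^4$.

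I do not anticipate a genuine obstacle, since both ingredients are already established earlier in the excerpt and the argument is just their concatenation; the only point requiring care is verifying the mean-zero condition needed for \eqref{varh1}, which follows at once from $\psi_{f_i}^{\ell_i} \equiv 0$ by the tower property of conditional expectation. If anything, one should double-check that the near-orthogonality estimate of Proposition \ref{port} is stated for precisely the index convention used here (supports indexed over $\{x_{i-1}+1,\dots,x_i\}$ rather than $\{x_i+1,\dots,x_{i+1}\}$), but this is a bookkeeping matter rather than a mathematical difficulty.
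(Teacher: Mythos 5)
Your proposal is correct and is exactly the argument the paper intends: the authors state that Proposition \ref{p1} follows by ``putting together the estimates of Propositions \ref{KV} and \ref{port}'', which is precisely your chaining of the Kipnis--Varadhan inequality with the near-orthogonality bound, using $\psi_{f_i}^{\ell_i}\equiv 0$ to drop the projections. Your additional check that $\int f_i\, d\mu^n = 0$ (via the tower property) so that the $H_{-1}$-norm in \eqref{varh1} is well defined is a correct and worthwhile detail that the paper leaves implicit.
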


In what follows, Proposition \ref{p1} is all we need from this section. In particular, whenever a estimate like the one stated in Proposition \ref{p1} is available, the methods exposed in the following sections are independent of the spectral gap inequality, and in particular Theorem \ref{t1} holds as soon as Proposition \ref{p1} is available.

\subsection{Integration by parts}

For weighted differences of the functions $g(\eta(x))$, the $H_{-1}$-norm can be estimated without appealing to the spectral gap inequality. In the context of hydrodynamic limits, this estimate is sometimes called the {\em integration by parts formula}. We have the following result:

\begin{proposition}
\label{intpart}
Let $f: \bb N_0 \to \bb R$ be such that $\sum_{x \in \bb N_0} f(x)^2 < +\infty$. Then,
\[
\Big\| \sum_{x \in \bb N_0} \big(g(\eta(x))-g(\eta(x+1))\big) f(x) \Big\|_{-1}^2 \leq \sum_{x \in \bb N_0} f(x)^2.
\]
\end{proposition}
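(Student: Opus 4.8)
The plan is to estimate the $H_{-1}$-norm directly from its variational definition \eqref{varh1}, without invoking the spectral gap inequality. Writing $\varphi = \sum_{x} \big(g(\eta(x))-g(\eta(x+1))\big) f(x)$, I need to bound $\sup_h \{2\<\varphi,h\> - \<h,-Lh\>\}$ over local functions $h \in L^2(\mu^n)$. The key is to rewrite the linear term $\<\varphi,h\>$ by a discrete summation by parts so that the differences $g(\eta(x))-g(\eta(x+1))$ get transferred onto $h$, producing an expression that pairs naturally against the Dirichlet form $\<h,-Lh\>$.

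First I would compute the Dirichlet form of the generator $L$ on the product geometric measure $\mu^n$, which (by reversibility) has the standard form
\[
\<h,-Lh\> = \sum_{x \in \bb N_0} \int g(\eta(x)) \big( h(\eta^{x,x+1}) - h(\eta)\big)^2 \, d\mu^n,
\]
with the boundary convention $g(\eta(0))=\lambda$ built in. Then I would rewrite the linear term: using the identity $g(\eta(x))h(\eta) = \int g(\eta(x))\,h(\eta^{x,x+1}-\text{shift})\dots$ — more precisely, I want to use reversibility of $\mu^n$ under the bond flip, namely $\int g(\eta(x)) G(\eta)\,d\mu^n = \int g(\eta(x)) G(\eta^{x,x+1})\,d\mu^n$ for suitable $G$, so that $\<\varphi,h\>$ can be expressed in terms of the discrete gradients $\nabla_x h := h(\eta^{x,x+1})-h(\eta)$. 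The goal is to reach an expression of the form $\<\varphi,h\> = \sum_x f(x)\int g(\eta(x))\,\nabla_x h \, d\mu^n$ (up to boundary terms that vanish or are handled by the convention), after which the bound follows by pairing term-by-term.

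With that representation in hand, the conclusion is a direct application of the elementary inequality $2ab - a^2 \le b^2$. Concretely, for each bond $x$ I would write
\[
2 f(x)\int g(\eta(x)) \nabla_x h \, d\mu^n - \int g(\eta(x))(\nabla_x h)^2 d\mu^n \le f(x)^2 \int g(\eta(x)) \, d\mu^n \le f(x)^2,
\]
using $\int g(\eta(x))\,d\mu^n = \mu^n(\eta(x)\ge 1) = \lambda_n \le 1$ (and $=\lambda_n$ also at the origin by the convention). Summing over $x$ and taking the supremum over $h$ yields $2\<\varphi,h\> - \<h,-Lh\> \le \sum_x f(x)^2$, which is exactly the claimed bound.

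The main obstacle I anticipate is carrying out the summation by parts cleanly and verifying that the boundary contribution at $x=0$ behaves correctly under the convention $g(\eta(0))=\lambda_n$. One must check that the term $g(\eta(0))-g(\eta(1))$ pairs against the source/annihilation part of $L$ (the $\lambda[f(\eta^{0,1})-f(\eta)]$ term) in a way consistent with the bulk bonds, so that no spurious boundary term is left over and the single uniform bound $\int g(\eta(x))\,d\mu^n \le 1$ applies to every $x \in \bb N_0$. Once the algebra of the integration by parts and the reversibility identity is pinned down, the estimate is genuinely elementary and, crucially, uniform in $n$.
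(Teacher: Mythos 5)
Your proposal is correct and follows essentially the same route as the paper: the variational formula \eqref{varh1}, a change of variables turning $\<g(\eta(x))-g(\eta(x+1)),h\>$ into $\int g(\eta(x))\big(h(\eta^{x,x+1})-h(\eta)\big)\,d\mu^n$ (up to sign, which is harmless), the weighted Cauchy--Schwarz inequality exploiting $g^2=g$, and the identification of the resulting sum with the Dirichlet form $\<h,-Lh\>$. The boundary worry you raise resolves itself: with the convention $g(\eta(0))=\lambda$ the detailed-balance identity $\int \lambda\, G(\eta^{0,1})\,d\mu^n=\int g(\eta(1))G(\eta)\,d\mu^n$ holds exactly as for bulk bonds, so the uniform bound $\int g(\eta(x))\,d\mu^n=\lambda_n\le 1$ applies to every $x\in\bb N_0$.
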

\begin{proof}
Recall the variational formula \eqref{varh1} for the $H_{-1}$-norm. After a change of variables, 
\[
\<g(\eta(x)) -g(\eta(x+1)), h\> = \int g(\eta(x)) \big( h(\eta^{x,x+1})-h(\eta)\big) d\mu^n.
\]
Using the weighted Cauchy-Schwarz inequality and the fact that $g(\eta(x))^2 = g(\eta(x))$ we see that
\begin{equation}
\label{CauSch}
2f(x) \< g(\eta(x)) -g(\eta(x+1)), h\> \leq  \int g(\eta(x)) \big( h(\eta^{x,x+1})-h(\eta)\big)^2 d\mu^n + f(x)^2.
\end{equation}
Notice that
\[
\<h,-Lh\> = \sum_{x\in \bb N_0} \int g(\eta(x)) \big(h(\eta^{x,x+1})-h(\eta)\big)^2d\mu^n.
\]
Summing up \eqref{CauSch} on $x \in \bb N_0$ we obtain the desired bound.
\end{proof}

This estimate is not only simpler than Proposition \ref{p1} but also fundamental in what follows. The point is that the factor $\eta^\ell(x)$ appearing in the general estimates has a very big variance compared to $g(\eta(x))$. We will need the integration by parts formula in order to introduce a spatial average, after which Proposition \ref{p1} starts to be useful.

\section{Proofs}
\label{s4}
In this section we prove Theorems \ref{t1} and \ref{fBM}. The proof follows the classical structure for convergence theorems of stochastic processes: first we prove tightness of the sequence $\{X_t^n; t \geq 0\}_{n \in \bb N}$ with respect to the proper topology. Then we show that any limit point of this sequence is a martingale solution of \eqref{SHE} with zero initial condition. Then we finish the proof of the convergence arguing that by the uniqueness result of \cite{DemTsa}, the sequence $\{X_t^n; t \geq 0\}_{n \in \bb N}$ has a unique limit point. 

\subsection{The martingale decomposition and the continuity relation}
\label{s3.1}
The current processes $\{J_t^n(x); t \geq 0\}$ are Poisson compound processes with disjoint jumps. For any $x \in \bb N$ the processes $\{M_t^n(x); t \geq 0\}$ given by
\begin{equation}
\label{martdec_v0}
M_t^n(x) = J_t^n(x) - n^4\int_0^t \big( g(\eta_s^n(x)) -g(\eta_s^n(x\plus 1))\big) ds
\end{equation}
are martingales of quadratic variation
\[
\<M_t^n(x)\> = n^4 \int_0^t \big( g(\eta_s^n(x))+g(\eta_s^n(x\plus 1))\big) ds.
\]
These formulas also hold for $x =0$ if we use the convention $g(\eta_s^n(0))= \lambda_n$. Since the currents $J_t^n(x)$ have disjoint jumps, the martingales $\{M_t^n(x); t \geq 0\}_{x \in \bb N_0}$ are mutually orthogonal.

In order to simplify the notation, let us write $g_s^n(x) := g(\eta_s^n(x))$. Recall the definition of $X_t^n(f)$ as a sum of currents. We have that
\begin{equation}
\label{martdec}
M_t^n(f) = X_t^n(f) - X_0^n(f)- n^{3/2} \int_0^t \sum_{x \in \bb N_0} \big( g_s^n(x) - g_s^n(x\plus 1)\big) f\big(\tfrac{x}{n}\big) ds
\end{equation}
is a martingale of quadratic variation
\begin{equation}
\label{variation}
\<M_t^n(f)\> = \frac{1}{n} \int_0^t \sum_{x \in \bb N_0} \big( g_s^n(x) + g_s^n(x\plus 1)\big) f\big( \tfrac{x}{n}\big)^2 ds.
\end{equation}
Since $g$ is bounded by $1$ and $\bb E_n[g_s^n(x)] = \lambda_n$, $\<M_t^n(f)\>$ is of order $\mc O(1)$ and it does not vanish in the limit $n \to \infty$. This observation explains the factor $n^{5/2}$ in the definition of $f$: the exponent $5/2$ is tied to the time scale $n^4$. This still does not explain the choice of the time scale $n^4 t$. We will see that $n^4 t$ is the time scale on which the compensator and the martingale part of $X_t^n(f)$ have the same order. We call identity \eqref{martdec} the {\em martingale decomposition} of $X_t^n(f)$. Using the fact that $g_s^n(0) = \lambda_n$ we can rewrite the integral term in \eqref{martdec} as
\begin{equation}
\label{int}
n^{1/2} \int_0^t \sum_{x \in \bb N} \big(g_s^n(x)\minus \lambda_n\big) \nabla_x^n f ds,
\end{equation}
where $\nabla_x^n f := n(f(\frac{x}{n}) - f(\frac{x-1}{n}))$ is a discrete approximation of $f'(\frac{x}{n})$.

Apart from $x=1$, particles are not either created nor destroyed by the dynamics. Therefore, for $x \geq 2$ we have the relation
\begin{equation}
\label{cont}
J_t^n(x\minus 1)-J_t^n(x) = \eta_t^n(x) - \eta_0^n(x).
\end{equation}
In other words, what comes in minus what comes out is equal to what we have minus what we had. For $x=1$ this relation, properly understood, also holds. We call identity \eqref{cont} the {\em continuity relation}. This relation will allow us to express the integral term in \eqref{martdec} in terms of the process $X_t^n$, aside from an error term that goes to $0$ as $n \to \infty$. Let us explain in a heuristic way how will we do this. Notice that $g_s^n(x)$ is a Bernoulli random variable and $\int (1-g(\eta(x))) d\mu^n = 1-\lambda_n$ is asymptotically equivalent\footnote{
Loosely speaking, we will say that two sequences $a_n$, $b_n$ are asymptotically equivalent if $\frac{a_n}{b_n} \to 1$ as $n \to \infty$. To be precise, when the sequences are composed of random variables, we should specify the sense on which the limit holds, but this point will not be of any relevance.}
 to $\frac{1}{\rho_n}$, where $\rho_n = \frac{n}{b} -1= \int \eta(x) d\mu^n$. The so-called {\em Boltzmann-Gibbs principle} states that at the level of fluctuations, any local function can be approximated by a function of the density of particles. By the law of large numbers, this function should be $\frac{1}{\rho}$ in first approximation. Since we are looking at these variables at the level of the central limit theorem, it is reasonable to assume that the average of $g(\eta(x))-\lambda_n$ on the box $\Lambda_\ell(x)$ is well approximated by $\frac{1}{1+\rho_n^2}(\eta^\ell(x) -\rho_n)$.  If we replace $g_s^n(x) - \lambda_n$ by $\frac{1}{1+\rho_n^2}(\eta_s^n(x) -\rho_n)$ in the integral term of \eqref{martdec} and we use the continuity relation \eqref{cont}, we can rewrite this integral term as 
\[
b^2 \int_0^t \frac{1}{n^{5/2}} \sum_{x \in \bb N_0} J_s^n(x) \Delta f \big(\tfrac{x}{n}\big) ds = b^2 \int_0^t X_s^n(\Delta f) ds
\]
plus error terms. It is exactly at this passage that we need to use the hypothesis $f'(0) = 0$. Otherwise a non-vanishing boundary term would appear.
Therefore, assuming the validity of the Boltzmann-Gibbs principle it is possible to rewrite \eqref{martdec} as an approximate closed equation for the current fluctuation field $X_t^n$.

Notice that the function $g_s^n(x)$ is most of the time equal to $1$, being only eventually equal to $0$. Therefore, the dynamics is very singular and the Boltzmann-Gibbs principle can not be proved by the traditional method introduced in \cite{Cha} (see \cite[Chapter 11]{KipLan} for a comprehensive reference). For this reason we need to use the quantitative proof of the Boltzmann-Gibbs principle introduced in \cite{GonJar1}.

\subsection{Tightness}
\label{s3.2}
In this section we prove tightness of the sequence of processes $\{X_t^n; t \geq 0\}_{n \in \bb N}$. As usual, we restrict ourselves to a finite time horizon $[0,T]$. Tightness for the process in $[0,\infty)$ follows pasting intervals of fixed size $T$.

Recall that we are thinking about $\{X_t^n; t \in [0,T]\}$ as a measure-valued process. This is very convenient, since we can reduce tightness considerations to real-valued processes:

\begin{proposition}
\label{realtight}
The family $\{X_t^n; t \in [0,T]\}_{n \in \bb N}$ is tight with respect to the uniform topology if and only if for each function $f \in \mc C_c^\infty([0,\infty))$ the family of real-valued processes $\{X_t^n(f); t \in [0,T]\}_{n \in \bb N}$ is tight.
\end{proposition}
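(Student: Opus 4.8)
The plan is to identify Proposition \ref{realtight} as an instance of Mitoma's theorem, which characterizes tightness of processes taking values in the dual of a nuclear space through the tightness of their scalar projections. To set the stage I would fix a nuclear Fr\'echet (or countably Hilbert) space $\Phi$ of test functions on $[0,\infty)$ --- for instance a weighted Schwartz-type space adapted to the half-line, built from the eigenbasis of a Sturm--Liouville operator with a Neumann condition at the origin --- whose dual $\Phi'$ serves as the state space of the measure-valued process $\{X_t^n; t \in [0,T]\}$, and I would verify that $\mc C_c^\infty([0,\infty))$ is dense in $\Phi$. Nuclearity of $\Phi$ is the structural input that makes the entire reduction possible.

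The forward implication is immediate. For each fixed $f \in \Phi$ the evaluation map $\pi_f \colon \Phi' \to \bb R$, $\pi_f(\omega) = \omega(f)$, is continuous and linear, hence it induces a continuous map on path space; since the continuous image of a tight family is tight, tightness of $\{X_t^n\}$ forces tightness of each $\{X_t^n(f)\}$. By density of $\mc C_c^\infty([0,\infty))$ in $\Phi$, it suffices to test against $f \in \mc C_c^\infty([0,\infty))$, which is precisely the class appearing in the statement.

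The reverse implication is the substantive content, and it is exactly Mitoma's theorem: if every scalar process $\{X_t^n(f); t \in [0,T]\}$ is tight, then the $\Phi'$-valued process $\{X_t^n; t \in [0,T]\}$ is tight. The proof exploits nuclearity to dominate the uncountable family $\{\pi_f : f \in \Phi\}$ by a countable family of Hilbertian seminorms, thereby upgrading tightness that holds separately in each direction $f$ to joint tightness in $\Phi'$. I would invoke this theorem directly rather than reproduce its functional-analytic machinery.

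The point demanding the most care --- and the main obstacle --- is reconciling the statement with the \emph{uniform} topology. The currents $J_t^n(x)$ are compound Poisson processes, so a priori each $X_t^n$ is only c\`adl\`ag and Mitoma applies verbatim only in the Skorokhod setting. However, every jump of $X_t^n(f)$ has magnitude at most $n^{-5/2}\sup_x |f(\tfrac{x}{n})|$, which tends to $0$; consequently the sequence is $C$-tight, every limit point is supported on continuous paths, and tightness transfers to the uniform topology. This vanishing-jump estimate, together with the density of $\mc C_c^\infty([0,\infty))$ in $\Phi$, is what one must verify in order to pass cleanly between the two formulations.
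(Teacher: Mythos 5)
Your proposal is correct and is essentially the argument the paper itself relies on: the paper offers no proof, only the remark that the statement is standard with a pointer to \cite{KipLan}, and your route via Mitoma's theorem for processes with values in the dual of a nuclear space, supplemented by the vanishing-jump bound $n^{-5/2}\sup_x|f(\tfrac{x}{n})|\to 0$ to pass from Skorokhod to uniform tightness, is precisely that standard reduction. The one point worth tightening is the reverse implication: if you embed into a strictly larger nuclear Fr\'echet space $\Phi$, then upgrading tightness of $\{X_t^n(f)\}$ from the dense subclass $f\in\mc C_c^\infty([0,\infty))$ to all $f\in\Phi$ requires a uniform moment bound such as $\sup_n \bb E_n[\sup_{t\leq T} X_t^n(f)^2]\leq C\|f\|_\Phi^2$ (obtainable here from the martingale decomposition together with the Kipnis--Varadhan and integration-by-parts estimates), not density alone, whereas taking $\Phi=\mc C_c^\infty([0,\infty))$ itself and using Mitoma's criterion for $\mc D'$-valued processes sidesteps the issue entirely.
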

This proposition is standard in the context of hydrodynamic limits of interacting particle systems. A proof of it on finite volume can be found in \cite[Chapter 4]{KipLan}. The proof adapts easily to the unbounded case.

Recall the martingale decomposition \eqref{martdec}. The proof of tightness for $X_t^n(f)$ can be reduced to the proof of tightness of the martingale $M_t^n(f)$, the initial distribution $X_0^n(f)$ and the integral term in \eqref{martdec}. A simple computation shows that the initial distribution $X_0^n(f)$ converges in distribution to a Gaussian random variable of mean $0$ and variance $\frac{1}{b^2} \int F(x)^2 dx$. This integral is finite since $F \in \mc C_c^\infty([0,\infty)$ by definition.

For martingales, powerful methods are available. The integral term will be more demanding, but Kipnis-Varadhan inequality \eqref{KV} coupled with the integration by parts stated in \ref{intpart} will provide the necessary bounds. 

Let us state a convergence criterion for martingales, see \cite[Theorem 2.1]{Whi}
\begin{proposition}
\label{martwhitt}
Let $\{M_t^n; t \in [0,T]\}_{n \in \bb N}$ be a sequence of martingales with $M_0^n \equiv 0$ and let $\Delta_T^n$ the size of the biggest jump of $M_t^n$ in the interval $[0,T]$. Assume that
\begin{itemize}
\item[i)] $\<M_t^n\>$ converges in distribution to $\sigma^2 t$,
\item[ii)] $\Delta_T^n$ converges in probability to $0$. 
\end{itemize}
Then $\{M_t^n; t \in [0,T]\}_{n \in \bb N}$ converges in distribution to a Brownian motion of variance $\sigma^2$.
\end{proposition}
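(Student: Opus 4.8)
The plan is to prove this as an instance of the martingale central limit theorem, following the classical two-step strategy: first establish tightness of $\{M_t^n; t \in [0,T]\}_{n \in \bb N}$ in the Skorokhod space $D([0,T])$, and then identify every subsequential limit as a Brownian motion of variance $\sigma^2$ by means of L\'evy's characterization. Since the limit is thereby uniquely characterized, tightness upgrades to convergence in distribution of the whole sequence.

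For tightness I would verify Aldous's criterion. The compact containment condition follows from Doob's maximal inequality: since $M_t^n$ is a square-integrable martingale with $M_0^n \equiv 0$, we have $\bb E[\sup_{s \leq T}(M_s^n)^2] \leq 4\,\bb E[\langle M_T^n\rangle]$, and the right-hand side is bounded in $n$ because $\langle M_T^n\rangle$ converges in distribution to $\sigma^2 T$ by hypothesis i). For the oscillation control, let $\tau \leq T$ be a stopping time and $\delta > 0$. Optional stopping applied to the martingale $(M_t^n)^2 - \langle M_t^n\rangle$ yields
\[
\bb E\big[(M_{\tau+\delta}^n - M_\tau^n)^2 \,\big|\, \mc F_\tau\big] = \bb E\big[\langle M_{\tau+\delta}^n\rangle - \langle M_\tau^n\rangle \,\big|\, \mc F_\tau\big].
\]
Because $\langle M_t^n\rangle$ is close, for large $n$, to the uniformly continuous deterministic function $\sigma^2 t$, the increment on the right is small uniformly in $\tau$ once $\delta$ is small, which is precisely Aldous's condition.

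The identification step has three parts. First, any subsequential limit $M$ has continuous paths: the functional $\omega \mapsto \sup_{t \leq T}|\omega(t) - \omega(t^-)|$ is continuous on $D([0,T])$, so the maximal jump of $M$ is the distributional limit of $\Delta_T^n$, which vanishes in probability by hypothesis ii). Second, $M$ is a martingale and $M_t^2 - \sigma^2 t$ is a martingale; both identities pass to the limit from the corresponding ones for $M^n$ and for $(M_t^n)^2 - \langle M_t^n\rangle$, using the convergence $\langle M_t^n\rangle \to \sigma^2 t$. Finally, L\'evy's characterization tells us that a continuous martingale started at $0$ with quadratic variation $\sigma^2 t$ is a Brownian motion of variance $\sigma^2$.

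The main obstacle is the uniform integrability needed to transfer the martingale relations to the limit: convergence in distribution does not by itself permit exchanging limits with (conditional) expectations, and one needs uniform integrability of the families $\{(M_t^n)^2\}_n$. I would handle this by a truncation argument, stopping each $M^n$ at the first time $\tau_A$ its absolute value exceeds a level $A$. For the stopped processes one has $|M_{t \wedge \tau_A}^n| \leq A + \Delta_T^n$, which is bounded in probability by hypothesis ii), so the requisite uniform integrability is automatic and the identification goes through for the truncated sequence. Letting $A \to \infty$ and invoking the compact containment already established in the tightness step then removes the truncation.
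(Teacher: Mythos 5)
First, a point of reference: the paper does not prove this proposition at all; it is quoted from Whitt's survey \cite[Theorem 2.1]{Whi}, so your reconstruction is being measured against the standard textbook proofs (Whitt, Ethier--Kurtz, Jacod--Shiryaev) rather than against anything in this article. Your architecture --- Aldous tightness plus identification of continuous limits via L\'evy's characterization --- is exactly the standard one, so there is no divergence of approach to report. The problem is in the execution: at two separate points you convert distributional information into moment bounds without justification, and this is precisely the delicate part of the martingale FCLT. (1) In the compact containment step you bound $\bb E[\sup_{s\le T}(M_s^n)^2]$ by $4\,\bb E[\<M_T^n\>]$ and assert the latter is bounded ``because $\<M_T^n\>$ converges in distribution to $\sigma^2 T$.'' Convergence in distribution gives no control whatsoever on expectations: $\<M_T^n\>$ can converge in law to a constant while $\bb E[\<M_T^n\>]\to\infty$. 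The same defect infects your Aldous step, where you need $\bb E[\<M^n\>_{\tau+\delta}-\<M^n\>_\tau]$ small, not merely the increment small in probability. (2) In the truncation step you write that $|M^n_{t\wedge\tau_A}|\le A+\Delta_T^n$ is ``bounded in probability\dots so the requisite uniform integrability is automatic.'' Boundedness in probability does not imply uniform integrability: hypothesis ii) is compatible with $\Delta_T^n = n^{10}$ on an event of probability $1/n$, in which case $\bb E[(\Delta_T^n)^2]\to\infty$ and the family $\{(M^n_{t\wedge\tau_A})^2\}_n$ need not be uniformly integrable. The overshoot at the stopping time $\tau_A$ is a single jump, and controlling it is the whole difficulty; this is why the hypotheses in Whitt and in Ethier--Kurtz are stated with $\lim_n\bb E[(\Delta_T^n)^2]=0$ (or a deterministic bound on the jumps) rather than with convergence in probability alone.

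The repair for (1) is localization on the quadratic variation rather than on the martingale: set $\sigma_K^n=\inf\{t:\<M^n\>_t> K\}$, note that $\bb P(\sigma_K^n\le T)=\bb P(\<M_T^n\>>K)\to 0$ for $K>\sigma^2T$ by i), and work with the stopped martingales, for which $\<M^n\>_{t\wedge\sigma_K^n}\le K+\Delta_T^n(\<M^n\>)$; one then still has to argue the jump of the \emph{bracket} is negligible. For (2) one must either strengthen the jump hypothesis as above or compensate the large jumps, showing the compensated correction is negligible. In the one place this proposition is used in the paper the jumps of $M_t^n(f)$ are deterministically bounded by $\|f\|_\infty n^{-3/2}$ and $\<M_t^n(f)\>$ has uniformly bounded second moments, so all of these uniform integrability issues are vacuous there; but as a proof of the proposition as stated, the argument has a genuine gap at both places, and at (2) the stated justification is simply false.
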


The integral term will be handled with Kolmogorov-Centsov's tightness criterion (see \cite[Exercise 2.4.11]{KarShr}):

\begin{proposition}[Kolmogorov-Centsov's  criterion]
\label{KC}
Let $\{Y_t^n; t \in [0,T]\}_{n \in \bb N}$ be a sequence of real-valued processes with continuous paths. Assume that there exist constants $K,a,a' >0$ such that
\[
E[|Y_t^n-Y_s^n|^a ] \leq K |t-s|^{1+a'}
\]
for any $s,t \in [0,T]$ and any $n \in \bb N$. Then the sequence $\{Y_t^n; t \geq 0\}_{n \in \bb N}$ is tight with respect to the uniform topology.
\end{proposition}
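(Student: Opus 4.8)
The plan is to deduce tightness in $C([0,T])$ with the uniform topology from two ingredients, via the Arzel\`a--Ascoli and Prokhorov theorems: tightness of the one-point marginals at a fixed time, and a uniform (in $n$) control of the modulus of continuity of the paths. The first ingredient is not literally contained in the hypothesis, since the increment bound with one endpoint fixed controls $Y_t^n - Y_s^n$ but not $Y_s^n$ itself; I would therefore add tightness of $\{Y_0^n\}_{n \in \bb N}$ as a standing assumption, noting that in the intended application the integral term satisfies $Y_0^n \equiv 0$, so this is automatic. The substance of the proof is the second ingredient, which I would extract from the moment bound by a dyadic chaining argument, following the proof of Kolmogorov's continuity theorem.

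Assume $T = 1$ without loss of generality and set, for each $m$, $\xi_m^n = \max_{1 \leq j \leq 2^m} | Y_{j2^{-m}}^n - Y_{(j-1)2^{-m}}^n |$. Fix $\gamma$ with $0 < \gamma < a'/a$. Markov's inequality applied to the hypothesis gives $P\big( | Y_{j2^{-m}}^n - Y_{(j-1)2^{-m}}^n | > 2^{-\gamma m}\big) \leq K 2^{-m(1+a'-\gamma a)}$, and a union bound over the $2^m$ subintervals determined by $D_m = \{ j 2^{-m} : 0 \leq j \leq 2^m\}$ yields $P(\xi_m^n > 2^{-\gamma m}) \leq K 2^{-m(a'-\gamma a)}$, with exponent $a' - \gamma a > 0$. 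The decisive feature is that these bounds are uniform in $n$, because $K$, $a$, $a'$ are.

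Next I would invoke the standard chaining lemma: for dyadic $s < t$ with $t-s \leq 2^{-N}$ one has $| Y_t^n - Y_s^n | \leq 2 \sum_{m > N} \xi_m^n$, obtained by joining $s$ to $t$ through successive dyadic refinements and telescoping. Thus the dyadic modulus of continuity at scale $2^{-N}$ is dominated by $2 \sum_{m>N} \xi_m^n$. On the event that $\xi_m^n \leq 2^{-\gamma m}$ for every $m > N$, this sum is at most $2\sum_{m>N} 2^{-\gamma m}$, which tends to $0$ as $N \to \infty$; the complementary event has probability at most $\sum_{m>N} P(\xi_m^n > 2^{-\gamma m}) \leq K \sum_{m>N} 2^{-m(a'-\gamma a)}$, a tail tending to $0$ as $N \to \infty$ uniformly in $n$. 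Given $\epsilon, \eta > 0$, choosing $N$ so large that $2\sum_{m>N} 2^{-\gamma m} < \epsilon$ and that the tail sum is $< \eta$, and using path continuity to pass from dyadic times to all of $[0,T]$, gives $\limsup_n P\big( \sup_{|t-s| \leq 2^{-N}} | Y_t^n - Y_s^n | > \epsilon \big) < \eta$, which is precisely the required asymptotic equicontinuity.

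Combining one-point tightness with this uniform modulus estimate, Arzel\`a--Ascoli identifies the relatively compact sets of $C([0,T])$ and Prokhorov's theorem converts the control into tightness of the laws. I expect the chaining estimate and the passage from dyadic to arbitrary times to be the only genuinely delicate points; the rest is a routine combination of Markov's inequality and a Borel--Cantelli-type tail bound. An alternative that sidesteps the dyadic bookkeeping is to apply the Garsia--Rodemich--Rumsey inequality with $\Psi(x) = x^a$ and $p(u) = u^\alpha$ for a suitable exponent $\alpha \in (2/a, (2+a')/a)$; this directly produces a H\"older modulus with random constant whose $a$-th moment is bounded uniformly in $n$, again yielding equicontinuity. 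I would regard the chaining proof as the more self-contained option.
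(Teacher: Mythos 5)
Your proof is correct and is the standard dyadic-chaining argument; the paper itself offers no proof of this proposition, referring instead to \cite[Exercise 2.4.11]{KarShr}, whose intended solution is precisely the chaining scheme you describe (Markov plus union bound over the dyadic grid $D_m$, the telescoping estimate $|Y_t^n-Y_s^n|\leq 2\sum_{m>N}\xi_m^n$, and Arzel\`a--Ascoli/Prokhorov). You are also right to flag that the statement as written is incomplete: an increment bound alone cannot give tightness (translate each $Y^n$ by a deterministic constant $c_n\to\infty$), so tightness of the one-point marginals, e.g.\ of $\{Y_0^n\}_{n\in\bb N}$, must be added --- and it holds trivially in the paper's application, where the integral term \eqref{int} vanishes at $t=0$.
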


Now we are in position to prove the tightness of $\{X_t^n; t \geq 0\}$. Notice that the current processes $J_t^n(x)$ have jumps of size $1$. Therefore the jumps of $M_t^n(f)$ are at most of size $\frac{\|f\|_\infty}{n^{3/2}}$. In particular the martingales $M_t^n(f)$ satisfy part ii) of the convergence criterion. Recall the martingale decomposition \eqref{martdec} and the formula \eqref{variation} for the quadratic variation of $M_t^n(f)$. In order to prove i), it is enough to observe that
\[
\lim_{n \to \infty} \bb E_n \big[\<M_t^n(f)\>\big] = 2t \|f\|_{L^2(\bb R)}^2
\]
and that
\[
\bb E_n \big[\big( \<M_t^n(f)\>-\bb E_n[\<M_t^n(f)\>]\big)^2\big] \leq \frac{C t^2}{n^3} \sum_{x \in \bb N} f\big(\tfrac{x}{n}\big)^2
\]
for some constant $C$ depending only on $\{\lambda_n\}_{n\in \bb N}$. Therefore, not only the martingale  sequence $\{M_t^n(f); t \in [0,T]\}_{n \in \bb N}$ is tight but it also converges to a Brownian motion of variance $2\int f(x)^2 dx$.

The integral term \eqref{int} is more demanding. Let us introduce the definitions
\[
g^\ell(x) = \frac{g(\eta(x\plus 1))+\dots g(\eta(x\plus \ell))}{\ell}
\]

\[
g_s^{n,\ell}(x) = \frac{ g_s^n(x\plus 1)+\dots + g_s^n(x\plus \ell)}{\ell}.
\]
Let $h: \bb N_0 \to \bb R$ be such that $\sum_{x} h(x)^2 <+\infty$. Notice that $\|\cdot\|_{-1}$ satisfies the triangle inequality. Using the triangle inequality twice we see that 
\begin{align*}
\big\| \sum_{x \in \bb N_0} \big(g(\eta(x)) -g^\ell(x))) h(x) \big\|_{-1} 
		&\leq \frac{1}{\ell} \sum_{j=1}^\ell \sum_{i=1}^j \big\| \sum_{x \in \bb N_0}\big(g(\eta(x\plus i\minus 1))-g(\eta(x\plus i))\big)h(x) \big\|_{-1}\\
		& \leq \ell \Big( \sum_{x \in \bb N_0} h(x)^2\Big)^{1/2}.
\end{align*}
Combining this estimate with Proposition \ref{KV} we obtain the bound
\begin{equation}
\label{est1}
\bb E_n\Big[\Big(n^{1/2}\int_0^t \sum_{x \in \bb N_0} \big(g_s^n(x)-g_s^{n,\ell}(x)\big) \nabla_x^n f ds \Big)^2 \Big]
		\leq \frac{18t \ell^2}{n^3} \sum_{x \in \bb N_0} (\nabla_x^n f)^2,
\end{equation}
which is of order $\mc O(\frac{t\ell^2}{n^2})$. By Cauchy-Schwarz inequality,
\[
\bb E_n\Big[ \Big( n^{1/2} \int_0^t \sum_{x \in \bb N_0} (g_s^{n,\ell}(x) \minus \lambda_n) \nabla_x^n f ds \Big)^2 \Big]
		\leq \frac{b t^2 }{\ell} \sum_{x \in \bb N_0} (\nabla_x^n f)^2,
\]
which is of order $\mc O(\frac{t^2n}{\ell})$. Choosing $\ell = \lceil n t^{1/3}\rceil$ we have just proved that there exists a constant $C:=C(f)$ such that
\[
\bb E_n\Big[\Big( n^{1/2} \int_0^t \sum_{x \in \bb N}\big(g_s^n(x) -\lambda_n\big) \nabla_x^n f ds \Big)^2 \Big]
		\leq Ct^{5/3}
\]
for any $n \in \bb N$ and any $t \in [0,T]$. Since the increments of this process are stationary, we have just proved that the hypothesis of Proposition \ref{KC} holds for the integral term \eqref{int} with $a=2$ and $a' = \frac{2}{3}$. In consequence, the processes
\[
n^{1/2} \int_0^t \sum_{x \in \bb N}\big(g_s^n(x) -\lambda_n\big) \nabla_x^n f ds 
\]
are tight. We conclude that $\{X_t^n(f); t \in [0,T]\}_{n \in \bb N}$ is tight for any $f \in \mc C_c^\infty([0,\infty))$ and by Proposition \ref{realtight} the measure-valued processes $\{X_t^n; t \in [0,T]\}_{n \in \bb N}$ are tight.

\subsection{The Boltzmann-Gibbs principle}

Our objective in this section is to prove the so-called {\em Boltzmann-Gibbs principle} for the integral \eqref{int}:

\begin{proposition}[Boltzmann-Gibbs Principle]
\label{BG}
For any function $f\in \mc C_c^\infty([0,\infty))$,
\begin{equation}\label{BGlim1}
\lim_{n \to \infty} \bb E_n \Big[\Big(n^{1/2}\int_0^t \sum_{x \in \bb N} \big(g_s^n(x) \minus \lambda_n \minus \tfrac{1}{(1 +\rho_n)^2} \big(\eta_s^n(x) \minus \rho_n\big)\big) \nabla_x^n f ds\Big)^2\Big] = 0.
\end{equation}
\end{proposition}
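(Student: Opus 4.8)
The plan is to prove Proposition \ref{BG} by the quantitative, multiscale strategy of \cite{GonJar1}, adapted to the diverging density. Write $V_n(\eta(x)) = g(\eta(x)) \minus \lambda_n \minus c_n(\eta(x)\minus\rho_n)$ with $c_n = \frac{1}{(1\plus\rho_n)^2}$, so that the integrand in \eqref{BGlim1} is $\sum_{x\in\bb N} V_n(\eta_s^n(x))\nabla_x^n f$. Two algebraic facts drive everything. First, $\int V_n\, d\mu^n = 0$. Second, and this is the point of the particular constant $c_n$, one checks that $c_n = \Var(\eta(x))^{-1}\<g(\eta(x)),\eta(x)\minus\rho_n\>$ is exactly the $L^2(\mu^n)$ regression coefficient of $g(\eta(x))$ on $\eta(x)$, so that $V_n(\eta(x))$ is orthogonal to $\eta(x)\minus\rho_n$. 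Consequently the conditional expectation of $V_n$ on the density of any box has no linear part; it is a genuinely second order (in the density fluctuation) quantity. I will also keep in mind the orders $\rho_n\sim \frac{n}{b}$, $c_n\sim\frac{b^2}{n^2}$ and $\Var(g(\eta(x)))=\lambda_n(1\minus\lambda_n)\sim\frac{b}{n}$. By Proposition \ref{p1} and the Kipnis-Varadhan inequality of Proposition \ref{KV}, bounding \eqref{BGlim1} reduces to estimating $H_{-1}$-norms, and the whole proof consists in replacing $V_n(\eta(x))$, through a sequence of scales, by something whose $H_{-1}$-norm is small.

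\emph{First step (spatial averaging via integration by parts).} I would replace the single-site function $g(\eta(x))$ by its box average $g^\ell(x)$ and $\eta(x)$ by $\eta^\ell(x)$. The first replacement is controlled exactly as in the tightness estimate \eqref{est1}: the telescoping of gradients $g(\eta(y))\minus g(\eta(y\plus1))$ is handled by the integration by parts formula of Proposition \ref{intpart}, and after the $n^{1/2}$ prefactor and the time scale this error is of order $\mc O(\frac{t\ell^2}{n^2})$, hence negligible for $\ell = o(n)$. The replacement of $\eta(x)$ by $\eta^\ell(x)$ comes multiplied by the tiny factor $c_n\sim\frac{b^2}{n^2}$, so even a crude summation by parts bound leaves ample room. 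After this step the problem is reduced to the block function $U^\ell(x)=g^\ell(x)\minus\lambda_n\minus c_n(\eta^\ell(x)\minus\rho_n)$, supported in $\Lambda_\ell(x)$; this is the spatial average the remark after Proposition \ref{intpart} refers to, and it is exactly what makes the spectral-gap estimates applicable.

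\emph{Second step (doubling).} Now I telescope $U^\ell$ across dyadic scales $\ell_j = 2^j\ell$, $j=0,\dots,K$ with $\ell_K = L$, writing
\[
U^\ell(x) = \big(U^\ell(x)\minus\psi_{U^\ell}^{\ell}(x)\big) + \sum_{j=0}^{K-1}\big(\psi_{U^\ell}^{\ell_j}(x)\minus\psi_{U^\ell}^{\ell_{j+1}}(x)\big) + \psi_{U^\ell}^{L}(x),
\]
where $\psi_{U^\ell}^{m}(x)=E[U^\ell(x)\mid\eta^m(x)]$. Each summand is a local function to which the projection estimate preceding Proposition \ref{port}, together with Proposition \ref{p1}, applies; as in \cite{GonJar1} the telescoping is arranged so that the relevant blocks are orthogonal in $H_{-1}$, after partitioning $\bb N$ into the $\mc O(\ell_{j+1})$ residue classes that make them disjoint and summing the contributions with Minkowski's inequality. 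The orthogonality of $V_n$ to the density forces $\Var(\psi_{U^\ell}^{m})=\mc O(m^{-2})$ up to the density-dependent prefactors, which produces geometric decay in $j$; the series over scales therefore converges, and the coarsest term $\psi_{U^\ell}^{L}$, which tends to $\int U^\ell\, d\mu^n = 0$ as $L\to\infty$, is sent to $0$ by taking $L$ large with $n$.

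The main obstacle is the bookkeeping of the diverging density inside these variance bounds. The spectral-gap input (Propositions \ref{SG} and \ref{p1}, and the projection estimate) carries a factor $(\ell\plus k)^2$ together with the weight $(1\plus\eta^\ell(x))^2$; because here $\eta^\ell(x)\sim\rho_n\sim\frac{n}{b}$ and the typical occupation is $k\sim\ell\rho_n$, these quantities are larger by powers of $n$ than in the classical bounded density situation of \cite{GonJar1}. The crux is to carry the exact powers of $\rho_n$ through every $\Var\big((1\plus\eta^m(x))(\,\cdot\,)\big)$ and to verify that this growth is defeated by the combination of the $n^{-4}$ in the Kipnis-Varadhan inequality, the smallness $c_n\sim n^{-2}$ and $\Var(g)\sim n^{-1}$, and the second order decay $\mc O(m^{-2})$ coming from the density orthogonality. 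Once the $\rho_n$-dependence is tracked, one optimizes the scales $\ell=\ell(n)$ and $L=L(n,t)$, in the spirit of the choice $\ell=\lceil n t^{1/3}\rceil$ used for tightness, so that every error term in the telescoping vanishes as $n\to\infty$, which yields \eqref{BGlim1}. This density-dependent balancing is precisely the adaptation of \cite{GonJar1} demanded by the present model, and it is the heart of the argument.
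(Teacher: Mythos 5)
Your overall strategy --- integration by parts to introduce a block average, then spectral-gap estimates applied to the difference between the block average and conditional expectations given the block density --- is the right one, and your identification of $\tfrac{1}{(1+\rho_n)^2}$ as the $L^2(\mu^n)$ regression coefficient of $g(\eta(x))$ on $\eta(x)$ is correct and is indeed why the statement has a chance of being true. Your first step coincides with Lemma \ref{l1}, and the first term of your telescoping, $U^\ell\minus\psi_{U^\ell}^{\ell}=g^\ell(x)\minus E[g^\ell(x)\mid\eta^\ell(x)]$, is exactly the content of Lemma \ref{l2}. Where you diverge is after that: you invoke the full dyadic doubling scheme of \cite{GonJar1}, whereas the paper stops after a \emph{single} conditioning step. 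The reason it can stop is that $E[g(\eta(x\plus 1))\mid\eta^\ell(x)]$ has the closed form \eqref{explicit}, so the remaining error is literally the second-order Taylor remainder of $z\mapsto(1+z)^{-1}$ at $\rho_n$ evaluated at $\eta^{n,\ell}(x)$, and it is disposed of by a brute-force Cauchy--Schwarz bound in time and space together with moment bounds on $\eta^{n,\ell}(x)\minus\rho_n$; no multiscale analysis is needed, and any $1\ll\ell\ll\sqrt{n}$ works. Your route would presumably also close (perhaps with a wider admissible range of $\ell$), but at the cost of the scale-by-scale bookkeeping you describe.

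As written, though, the proposal has two genuine gaps. First, you defer precisely the computations that constitute the proof: the claims that the weighted variances decay like $m^{-2}$ ``up to density-dependent prefactors'' and that the powers of $\rho_n$ are ``defeated'' are asserted, not verified, and this power counting is exactly where the adaptation to the diverging density lives (in the paper it is the computation following \eqref{var} inside the proof of Lemma \ref{l2}, and the estimates \eqref{e1}--\eqref{ec4.1}). Second, and more substantively, you never address the lower tail of the block density. The nonlinear part of the conditional expectation involves $(1\plus\eta^{n,\ell}(x))^{-1}$, which is $\mc O(1/n)$ only on the event $\{\eta^{n,\ell}(x)>a_n\}$ and is of order $1$ when the block is nearly empty; likewise your coarsest term $\psi_{U^\ell}^{L}$ is a function of the conserved block density, so it cannot be handled by $H_{-1}$ methods at all and must be bounded in $L^2$ directly --- observing that its mean is zero is not enough. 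The paper controls both issues with the Cram\'er bound \eqref{c1}--\eqref{c2}, which shows that $\mu^n(\eta^{n,\ell}(x)\leq a_n)$ decays exponentially in $\ell$ uniformly in $n$; this tail estimate is the genuinely non-standard ingredient forced by the diverging density, and without it (or a substitute) your variance bounds do not close.
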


What this proposition is telling us is that the integral \eqref{int} is well approximated as $n \to \infty$ by a linear function of the density of particles. 
The proof of this theorem is somehow winding. We will successively prove that the integral \eqref{int} is asymptotically equivalent to other expressions as $n \to \infty$, until we end up with the density of particles. First we will introduce a spatial average on $g_s^n(x)$. Then the main step comes, which is to replace spatial averages of $g_s^n(x)$ by a function of the particle density 
\[
\eta_s^{n,\ell}(x) = \frac{\eta_s^n(x\plus 1) + \dots + \eta_s^n(x\plus \ell)}{\ell}.
\]
Then we show that this function of $\eta_s^{n,\ell}(x)$ is well approximated by its linearization around $\rho_n$. Finally we undo the spatial average to recover the required estimate.

The following lemma is just a slight modification of estimate \eqref{est1}, so we state it without proof.
\begin{lemma}
\label{l1}
For any function $f \in \mc C_c^\infty([0,\infty))$,
\[
\bb E_n \Big[\Big(n^{1/2}\int_0^t \sum_{x \in \bb N} \big(g_s^n(x) \minus g_s^{n,\ell}(x\minus 1)\big) \nabla_x^n f ds\Big)^2\Big] 
		\leq \frac{18 t \ell^2}{n^3} \sum_{x \in \bb N_0} (\nabla_x^n f)^2.
\]
\end{lemma}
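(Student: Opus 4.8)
The plan is to repeat the argument that produced \eqref{est1} almost verbatim, the only change being that the empirical average $g_s^{n,\ell}(x\minus 1)$ now runs over the window $\{x,\dots,x\plus\ell\minus 1\}$ instead of $\{x\plus 1,\dots,x\plus\ell\}$. In static terms the integrand is $\sum_{x\in\bb N}\big(g(\eta(x))\minus g^\ell(x\minus 1)\big)\,\nabla_x^n f$, where $g^\ell(x\minus 1)=\tfrac1\ell\sum_{i=0}^{\ell-1}g(\eta(x\plus i))$. Since $\bb E_n[g(\eta(x))]=\lambda_n$ for every $x$, this function is centered with respect to $\mu^n$, so Proposition \ref{KV} applies to the corresponding additive functional.

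First I would decompose the centered function into a double telescoping sum of nearest-neighbor differences. The $i=0$ term vanishes, so
\[
g(\eta(x))\minus g^\ell(x\minus 1)=\frac1\ell\sum_{i=1}^{\ell-1}\big(g(\eta(x))\minus g(\eta(x\plus i))\big)=\frac1\ell\sum_{i=1}^{\ell-1}\sum_{j=1}^{i}\big(g(\eta(x\plus j\minus 1))\minus g(\eta(x\plus j))\big).
\]
Applying the triangle inequality for $\|\cdot\|_{-1}$ twice, exactly as in the display preceding \eqref{est1}, bounds $\big\|\sum_{x}(g(\eta(x))\minus g^\ell(x\minus 1))h(x)\big\|_{-1}$ by $\tfrac1\ell$ times the sum, over the $\tfrac{(\ell-1)\ell}{2}$ pairs $(i,j)$ with $1\le j\le i\le\ell\minus 1$, of the single-difference norms $\big\|\sum_{x}(g(\eta(x\plus j\minus 1))\minus g(\eta(x\plus j)))h(x)\big\|_{-1}$.

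Next I would bound each single-difference norm by the integration by parts formula of Proposition \ref{intpart}. After the index shift $y=x\plus j\minus 1$ each such term takes the form $\sum_{y}(g(\eta(y))\minus g(\eta(y\plus 1)))\tilde h(y)$ with $\tilde h$ a translate of $h$, so Proposition \ref{intpart} gives the bound $\big(\sum_{x\in\bb N_0}h(x)^2\big)^{1/2}$ uniformly over the pairs. Since there are $\tfrac{(\ell-1)\ell}{2}\le\tfrac{\ell^2}{2}$ such pairs, dividing by $\ell$ yields
\[
\Big\|\sum_{x\in\bb N}\big(g(\eta(x))\minus g^\ell(x\minus 1)\big)h(x)\Big\|_{-1}\le\ell\Big(\sum_{x\in\bb N_0}h(x)^2\Big)^{1/2},
\]
which is the same bound obtained for $g^\ell(x)$ just before \eqref{est1}.

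Finally I would take $h(x)=\nabla_x^n f$ and combine this $H_{-1}$-estimate with Proposition \ref{KV}. The outer factor $n^{1/2}$ squares to $n$, which against the $\tfrac{18t}{n^4}$ of the Kipnis-Varadhan bound produces $\tfrac{18t}{n^3}$, while squaring the norm estimate contributes $\ell^2\sum_x(\nabla_x^n f)^2$; together these give exactly the claimed bound. I do not anticipate any real obstacle here: the sole difference from \eqref{est1} is the shifted averaging window, which only moves the inner telescoping range from $\{1,\dots,\ell\}$ to $\{1,\dots,\ell\minus 1\}$ and hence leaves the final constant unchanged after the crude estimate by $\ell$.
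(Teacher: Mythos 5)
Your argument is correct and is exactly the ``slight modification'' of \eqref{est1} that the paper invokes when it states Lemma \ref{l1} without proof: the same double telescoping plus the triangle inequality for $\|\cdot\|_{-1}$, Proposition \ref{intpart} for each nearest-neighbor difference, and Proposition \ref{KV} at the end, with the shifted window only changing the inner telescoping range and leaving the constant unaffected.
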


What this lemma has accomplished is to replace $g_s^n(x) -\lambda_n$ by $g_s^{n,\ell}(x\minus 1) -\lambda_n$; the latter has a spatial average of size $\ell$. Notice that the sum $g_s^{n,\ell}(x\minus 1)$ starts at $x$. For $\ell \in \bb N$ define $\psi^\ell:\Omega \to \bb R$ as
\[
\psi^\ell_x(\eta) = E[ g(\eta(x\plus 1))| \eta^\ell(x)].
\]
An explicit computation shows that
\begin{equation}\label{explicit}
\psi^\ell_x(\eta) =1- \frac{1}{1+\frac{\ell}{\ell-1}\eta^\ell(x)}.
\end{equation}
For $x \in \bb N$ and $t \geq 0$, define
\[
\psi_t^{n,\ell} (x) = \psi^\ell_x(\eta_t^n).
\]
The core of the proof is the following lemma:

\begin{lemma}
\label{l2} 
For any function $f \in \mc C_c^\infty([0,\infty))$,
\[
\bb E_n \Big[\Big(n^{1/2}\int_0^t \sum_{x \in \bb N_0} \big(g_s^{n,\ell}(x) \minus \psi_s^{n,\ell}(x)\big) \nabla_{x+1}^n f ds\Big)^2\Big] \leq \frac{C(f) t \ell}{\sqrt{n}},
\]
where $C(f)$ is a constant which depends only on $f$ and the parameters of the model.
\end{lemma}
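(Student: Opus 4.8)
The starting point is to recognize that $\psi^\ell_x$ is exactly the conditional expectation of the spatial average $g^\ell(x)$ given the empirical density $\eta^\ell(x)$. Indeed, under the uniform measure $\mu_{k,\ell}$ the sites inside the box $\Lambda_\ell(x)$ are exchangeable, so $E[g(\eta(x\plus i))\,|\,\eta^\ell(x)]$ does not depend on $i$, and averaging over $i=1,\dots,\ell$ gives $E[g^\ell(x)\,|\,\eta^\ell(x)]=E[g(\eta(x\plus 1))\,|\,\eta^\ell(x)]=\psi^\ell_x$. Consequently the local function $V^\ell(x):=g^\ell(x)\minus\psi^\ell_x$, which is supported in $\Lambda_\ell(x)$, satisfies $\psi_{V^\ell(x)}^{\ell}\equiv 0$; that is, it is a mean-zero fluctuation on each hyperplane $\{\eta^\ell(x)=k/\ell\}$, precisely the situation in which Proposition \ref{p1} applies. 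Writing $Q$ for the quantity inside the expectation in the statement and $V_s^\ell(x)=g_s^{n,\ell}(x)\minus\psi_s^{n,\ell}(x)$, I would start from the identity $\bb E_n[Q^2]=n\,\bb E_n[(\int_0^t\sum_x V^\ell_s(x)\nabla^n_{x+1}f\,ds)^2]$ and control the time integral through the $H_{-1}$ estimate of Proposition \ref{p1}.

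The obstacle to a direct application is that the supports $\supp(V^\ell(x))=\{x\plus 1,\dots,x\plus\ell\}$ of consecutive terms overlap, whereas Propositions \ref{port} and \ref{p1} require disjoint supports. I would remove the overlap by splitting the sum according to residues modulo $\ell$: for $r\in\{0,\dots,\ell\minus 1\}$ the boxes $\Lambda_\ell(x)$ with $x\equiv r\ (\mathrm{mod}\ \ell)$ are pairwise disjoint and adjacent, so Proposition \ref{p1} controls the corresponding partial sum $Q_r$. Writing $Q=\sum_{r=0}^{\ell-1}Q_r$ and applying Cauchy--Schwarz in $r$ gives $\bb E_n[Q^2]\le\ell\sum_r\bb E_n[Q_r^2]$; since the deterministic factor $\nabla^n_{x+1}f$ may be absorbed into $f_i=V^\ell(x)\,\nabla^n_{x+1}f$ (for which $\psi_{f_i}^{\ell}\equiv 0$ still holds), Proposition \ref{p1} yields
\[
\bb E_n[Q^2]\ \le\ \frac{18\kappa_0\,t\,\ell^3}{n^3}\sum_{x\in\bb N_0}(\nabla^n_{x+1}f)^2\,\Var\big((1\plus\eta^\ell(x))V^\ell(x)\big).
\]

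The heart of the matter, and the step I expect to be hardest, is the estimate of the weighted variance $\Var((1\plus\eta^\ell(x))V^\ell(x))$, where the weight $(1\plus\eta^\ell(x))^2$ is typically of order $\rho_n^2\sim n^2$. Since $V^\ell(x)$ has zero conditional expectation given $\eta^\ell(x)$ while $(1\plus\eta^\ell(x))$ is a function of $\eta^\ell(x)$, the mean of the product vanishes and, by the tower property, $\Var((1\plus\eta^\ell(x))V^\ell(x))=E[(1\plus\eta^\ell(x))^2\,\Var(g^\ell(x)\,|\,\eta^\ell(x))]$. I would then write $\Var(g^\ell(x)\,|\,\eta^\ell(x)=k/\ell)=\ell^{-2}\Var(N\,|\,k)$, where $N$ is the number of occupied sites in the box; because single-site occupations are negatively correlated under the canonical measure $\mu_{k,\ell}$, discarding the off-diagonal terms gives the upper bound $\Var(N\,|\,k)\le\ell\,p(1\minus p)$ with $p=\frac{k}{k\plus\ell\minus 1}$ the occupation probability read off from \eqref{explicit}. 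A short computation then shows $(1\plus k/\ell)^2\,\ell^{-2}\,p(1\minus p)\lesssim k/\ell^2$, so that, taking expectations and using $E[k]=\ell\rho_n$, one obtains the crucial bound $\Var((1\plus\eta^\ell(x))V^\ell(x))\lesssim\rho_n/\ell$, uniformly in $x$ thanks to the product structure of $\mu^n$.

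Inserting this estimate together with $\sum_x(\nabla^n_{x+1}f)^2=O(n)$ (a Riemann sum for $\int(f')^2$) and $\rho_n=O(n)$ into the displayed inequality gives $\bb E_n[Q^2]\lesssim C(f)\,t\,\ell^2/n$, which in particular is bounded by the claimed $C(f)\,t\,\ell/\sqrt n$ in the relevant regime $\ell\le\sqrt n$. The two genuinely delicate points are the verification of the sign of the correlations under $\mu_{k,\ell}$, which is what allows the conditional variance to be replaced by its diagonal part, and the uniform control of the large weight $(1\plus\eta^\ell(x))^2$ against the small conditional variance; the remaining steps are routine once the conditional-expectation identity $\psi^\ell_x=E[g^\ell(x)\,|\,\eta^\ell(x)]$ is in place.
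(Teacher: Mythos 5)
Your proof is correct, and its first half---recognizing that $\psi^\ell_x=E[g^\ell(x)\,|\,\eta^\ell(x)]$ by exchangeability of $\mu_{k,\ell}$, disjointifying the boxes modulo $\ell$ at the cost of a Cauchy--Schwarz factor $\ell$, and applying Proposition \ref{p1} to reach the bound $\frac{18\kappa_0 t\ell^3}{n^3}\sum_x(\nabla^n_{x+1}f)^2\Var\big((1\plus\eta^\ell(x))(g^\ell(x)\minus\psi^\ell_x)\big)$---is exactly what the paper does. Where you genuinely diverge is in the estimate of the weighted variance. The paper treats $X=1\plus\eta^\ell(x)$ and $Y=g^\ell(x)\minus\psi^\ell_x$ as generic random variables and bounds $\Var(XY)$ via the elementary inequality \eqref{var}, which needs second and fourth moments of both factors; the dominant contribution there is the cross term, giving $\Var(XY)\leq Cn^{3/2}/\ell^2$ and hence the stated $C(f)t\ell/\sqrt n$. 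You instead exploit the orthogonality $E[Y\,|\,\eta^\ell(x)]=0$ to write the variance exactly as $E\big[(1\plus\eta^\ell(x))^2\,\Var(g^\ell(x)\,|\,\eta^\ell(x))\big]$, and then bound the conditional variance by its diagonal part using negative correlation of the occupation indicators under $\mu_{k,\ell}$ --- which does hold: a direct count with the uniform measure gives covariance $-k(\ell\minus 1)/\big((k\plus\ell\minus 1)^2(k\plus\ell\minus 2)\big)$. This kills the cross term entirely and yields the sharper bound $\Var(XY)\lesssim \rho_n/\ell$, hence $C(f)\,t\,\ell^2/n$ for the full expectation. That is at least as strong as the paper's bound precisely in the regime $\ell\leq\sqrt n$, which is also the only regime in which the paper's own three-term estimate actually sums to $Cn^{3/2}/\ell^2$ (its third term is $n/(b\ell)$), and the only regime in which the lemma is ever invoked (the Boltzmann--Gibbs argument takes $1\ll\ell\ll\sqrt n$). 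In short, your route buys a cleaner and slightly stronger variance estimate at the price of verifying negative association under the canonical measure, while the paper's moment inequality is more robust (it would survive if $Y$ were not exactly conditionally centered) but lossier.
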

\begin{proof}
Notice that if $|x-x'| \geq \ell$ then the supports of the functions 
\[
g_s^{n,\ell}(x) \minus \psi_s^{n,\ell}(x), \quad g_s^{n,\ell}(x') \minus \psi_s^{n,\ell}(x')
\]
are disjoint. Therefore, at the price of a multiplicative constant $\ell$ we can put ourselves into the setting of Proposition \ref{p1}.
Therefore, the expectation above is bounded by
\begin{equation}
\label{bound1}
\frac{18\kappa_0 t \ell^3}{n^3} \sum_{x \in \bb N_0} (\nabla_{x+1}^n f)^2 \Var\big(\big(1+\eta^\ell(x)\big)\big(g^\ell(x)-\psi^\ell(x)\big)\big).
\end{equation}

In order to estimate the variance in \eqref{bound1} we use the elementary inequality
\begin{align}\label{var}
\Var (XY) \leq \big[E\big(X-\rho_X\big)^4\big]^{1/2}[EY^4]^{1/2} +2\rho_X[EY^4]^{1/2}\big[E\big(X-\rho_X\big)^2\big]^{1/2} + 
\rho_X^2EY^2,
\end{align}
which comes from the identity 
\[
X^2Y^2 = \big(X-\rho_X\big)^2Y^2 + 2\rho_XY^2\big(X-\rho_X\big) +\rho_X^2Y^2,
\]
valid for any random variables $X$, $Y$  with means $\rho_X$ and zero, respectively. 

Taking $X = 1+\eta^\ell(x)$ and $Y = g^\ell(x)-\psi^\ell(x)$ we see that $EX =  \frac{n}{b}$, $E\big(X-\rho_X\big)^2 \leq \frac{n^2}{b^2\ell}$ \ and $ EY^2 \leq  \frac{b}{n\ell}$.  Furthermore, there exists a finite constant $C$ such that $ E\big(X-\rho_X\big)^4\leq \frac{Cn^4}{\ell^3}$ and $EY^4 \leq \frac{C}{n\ell^3}$.  Putting this estimates into \eqref{var}, we obtain the bound
\[
\Var\big(\big(1+\eta^\ell(x)\big)\big(g^\ell(x)-\psi^\ell(x)\big)\big)\leq
\frac{C n^{3/2}}{\ell^2},
\]
 and we conclude that \eqref{bound1} is bounded by
\[
\frac{Ct \ell}{n^{3/2}} \sum_{x \in \bb N} (\nabla_x^n f\big)^2,
\]
which completes the  proof.
\end{proof}

\begin{proof}[Proof of Proposition {\ref{BG}}]
Let us summarize what we have done up to here. On one hand, combining Lemmas \ref{l1} and \ref{l2} we see that the integral term \eqref{int} is asymptotically equivalent to
\[
n^{1/2} \int_0^t \sum_{x \in \bb N} \big(\psi_s^{n,\ell}(x) \minus \lambda_n\big) \nabla_{x+1}^n f ds
\]
as soon as $\ell \ll n^{1/2}$. On the other hand, it is easy to check that the term $\eta_s^n(x)$ appearing in the formulation of the Boltzmann-Gibbs principle can be replaced by the spatial average $\eta_s^{n,\ell}(x)$ whenever $\ell \ll n$. Therefore, in order to prove Proposition \ref{BG} it remains to verify

\begin{equation}
\label{BGlim2}
\lim_{n \to \infty} \bb E_n \Big[\Big(n^{1/2}\int_0^t \sum_{x \in \bb N} 
\big(\psi_s^{n,\ell}(x) \minus \lambda_n \minus \tfrac{1}{(1+\rho_n)^2} \big(\eta_s^{n,\ell}(x) \minus \rho_n\big)\big) \nabla_{x+1}^{n} f ds \Big)^2\Big] = 0.
\end{equation}

The same argument used at the beginning of the proof of Lemma \ref{l2} permits to bound the preceding expectation by
\begin{equation}\label{e1}
C \ell n^2t^2 \bb E_n \Big[\Big(\psi^{n,\ell}(x) \minus \lambda_n \minus \tfrac{1}{(1+\rho_n)^2} \big(\eta^{n,\ell}(x) \minus \rho_n\big)\Big)^2\Big],
\end{equation}
where $C = C(f)$ is a constant depending solely on $f$.

Observe that the expression squared into the expectation above \emph{almost} corresponds to the error committed in the linearization of the function $h(z)=(1+z)^{-1}$ around $\rho_n$ evaluated at $\eta^{n,\ell}(x)$, which is explicitly given  by
\begin{equation*}
 \frac{1}{1+z}  - \frac{1}{1+\rho_n} + \frac{(z-\rho_n)}{(1+\rho_n)^2} \  = \  \frac{(z-\rho_n)^2}{(1+z) (1+\rho_n)^2} .
\end{equation*}
The word almost in the preceding paragraph is due to the term $\frac{\ell}{\ell - 1}$ appearing in \eqref{explicit}. Rearranging terms in a convenient way, the expectation in \eqref{e1} can be bounded above by twice  
\begin{equation}
\label{e2}
 \frac{1}{ (1+\rho_n)^4}\bb E_n \Big[ \frac{(\eta^{n,\ell}(x)-\rho_n)^4}{(1+\eta^{n,\ell}(x))^2} \Big]
 +
 \frac{1}{ (\ell -1)^2}  \bb E_n \big[ \big(1+ \eta^{n,\ell}(x)\big) ^{-2}  \big].
\end{equation}
Given $a>b$, let us define $ a_n = \tfrac{n}{a} - 1$ Considering separately the cases $\{ \eta^{n,\ell}(x) > a_n\}$ and $\{ \eta^{n,\ell}(x) \leq a_n\}$ we can bound the two expectations above by
\begin{equation}
\label{e3}
\tfrac{a^2}{n^2} \  \bb E_n \Big[ (\eta^{n,\ell}(x)-\rho_n)^4 \Big] +\rho_ n^4 \  \mu^n\big( \eta^{n,\ell}(x) \leq a_n\big) 
  \quad \text{and} \quad
\tfrac{a^2}{n^2} +  \mu^n\big( \eta^{n,\ell}(x) \leq a_n\big),
\end{equation}
respectively.

From \eqref{e1}-\eqref{e3} and the fact that $\bb E_n\big[ (\eta^{ \ell}(x) \minus \rho_n)^4 \big] = \mc O ({ n^4}/{\ell^3})$,  we see that in order to prove \eqref{BGlim2} it is enough to show that
\begin{equation}
\label{ec4.1}
a^2 \big( \tfrac{1}{\ell^2} + \tfrac{\ell}{(\ell -1)^2} \big) + \ell n^2 \big( 1 + \tfrac{1}{(\ell -1)^2} \big)  \mu^n\big( \eta^{n,\ell}(x) \leq a_n\big) 
\end{equation}
goes to zero as $n$ goes to infinity.  According to \eqref{c1} below,  the probability $\mu^n(\eta^{n,\ell}(x) > a_n)$ decays exponentially fast in $\ell$. Therefore, the expression in \eqref{ec4.1} goes to $0$  as soon as $1 \ll \ell$ . This ends the proof of the Boltzmann-Gibbs principle.
\end{proof}

For the sake of completeness, we conclude this subsection with the derivation of  the exponential estimates mentioned above for  tail probabilities of $\eta^\ell(x)$. The estimates are not completely standard due to the increasing density of particles. 

Let $X$ be a random variable with distribution $\Geom(\theta)$: geometric distribution of success probability $\theta$. Notice that $\rho := E[X] = \frac{1 - \theta}{\theta}$. Furthermore, denoting by $\mc M_\rho(\lambda): = E[e^{\lambda X}]$ the moment generating function of $X$, we have that
\[
\mc M_\rho(\lambda) = \frac{1}{1-\rho(e^\lambda-1)}
\]
for $\lambda < \ln(\frac{1+\rho}{\rho})$ and $\mc M_\rho(\lambda) = + \infty$ otherwise. 

Notice that $\eta^{n,\ell}(x)$ is the average of $\ell$ independent geometric random variables with success probability $\frac{b}{n}$, thus $\rho_n := E[\eta^{n}(x)]= \frac{n}{b} - 1 $. Therefore, Cr\'amer's method allows us to obtain the following exponential bounds on tail probabilities: for any $ a \geq 0$
\begin{equation}
\label{c1}
\tfrac{1}{\ell} \log P( \eta^{n,\ell}(x) \leq a) \leq -\mc I_{\rho_n}( a),
\end{equation}
where $\mc I_\rho$ denotes the large deviations rate function associated to geometric distributions of mean $\rho$:
\begin{equation*}
\mc I_\rho(a) := \sup_{\lambda \in \bb R} \big\{ \lambda a - \log \mc M_\rho(\lambda)\big\} = a \log \frac{a(1+\rho)}{\rho(1+a)} - \log \frac{1+a}{1+\rho}.
\end{equation*}

On the other hand, it is no difficult to see that taking $ a_n = \tfrac{n}{a} - 1$ we have
\begin{equation}
\label{c2}
\lim_{n \to \infty} \mc I_{\rho_n}( a_n)=  \tfrac{b}{a} -\log \tfrac{b}{a} -1.
\end{equation}

Observe that the right hand side of the last line coincides with the large deviations rate function associated to exponential distributions. Indeed, this is consistent with the well known fact that if $X^n$ has distribution $\Geom(\tfrac{b}{n})$, then $\tfrac 1n X^n$ converges to an exponential distribution of mean $\frac{1}{b}$.

\subsection{The convergence}

Now we are in place to prove Theorem \ref{t1}. In Section \ref{s3.2} we showed tightness of the sequence $\{X_t^n; t \in [0,T]\}_{n \in \bb N}$ with respect to the uniform topology. Then there are subsequence $n'$ and measure-valued process  $\{X_t^\infty; t \in [0,T]\}$ with continuous paths such that $\{X_t^{n'}; t \in [0,T]\}$ converges to $\{X_t^\infty; t \in [0,T]\}$ in distribution with respect to the uniform topology. Recall the continuity relation stated in \eqref{cont}. We have the relation
\begin{equation}
\label{cont2}
\begin{split}
\frac{b^2}{n^{3/2}} \sum_{x \in \bb N} \big(\eta_s^n(x) -\rho_n\big) \nabla_x^n f
		&= \frac{b^2}{n^{5/2}} \sum_{x \in \bb N_0} J_s^n(x) \Delta_x^n f + \frac{b^2}{n^{3/2}} \sum_{x \in \bb N} \big(\eta_0^n(x) -\rho_n\big) \nabla_x^n f\\
		&-\frac{b^2}{n^{3/2}}  J_s^n(0) \nabla_0^n f,
\end{split}
\end{equation}
where $\Delta_x^n f := n(\nabla_{x+1}^n f - \nabla_x^n f)$ is a discrete approximation of $\Delta f(\frac{x}{n})$. Notice that the last term on the right-hand side of this identity is a boundary term. The following proposition will prove to be useful.

\begin{proposition}
\label{current}
Let $h: \bb N_0 \to \bb R$ be such that $\sum_{x \in \bb N_0} h(x)^2<+\infty$. Then,
\[
\bb E_n \Big[ \Big( \sum_{x \in \bb N_0} J_t^n(x) h(x) \Big)^2 \Big] \leq 32 t n^4 \sum_{x \in \bb N_0} h(x)^2.
\]
\end{proposition}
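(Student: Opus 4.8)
The plan is to start from the martingale decomposition \eqref{martdec_v0}, which reads $J_t^n(x) = M_t^n(x) + n^4\int_0^t\big(g_s^n(x)-g_s^n(x\plus 1)\big)\,ds$ for every $x \in \bb N_0$ (using the convention $g_s^n(0)=\lambda_n$). Summing against $h$ and separating the martingale part from the drift part, I write
\[
\sum_{x \in \bb N_0} J_t^n(x)\,h(x) \;=\; P \;+\; Q, \qquad P := \sum_{x \in \bb N_0} M_t^n(x)\,h(x), \quad Q := n^4\int_0^t \sum_{x \in \bb N_0}\big(g_s^n(x)-g_s^n(x\plus 1)\big) h(x)\,ds.
\]
Both infinite sums converge in $L^2(\bb P_n)$ because $\sum_x h(x)^2<\infty$ (for $P$ this uses orthogonality plus the uniform bound on the quadratic variations obtained below, for $Q$ the integration by parts estimate). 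The crucial point is that I will estimate $\|P\|_{L^2(\bb P_n)}$ and $\|Q\|_{L^2(\bb P_n)}$ separately and then recombine them with Minkowski's inequality rather than with the cruder bound $(a\plus b)^2\leq 2a^2\plus 2b^2$; it is exactly Minkowski that produces the sharp constant $32$.

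For the martingale term, mutual orthogonality of the $\{M_t^n(x)\}_{x\in\bb N_0}$ annihilates all cross terms, so that $\bb E_n[P^2] = \sum_x h(x)^2\,\bb E_n[\<M_t^n(x)\>]$. Taking expectations in the quadratic variation formula stated after \eqref{martdec_v0} and using stationarity with $\bb E_n[g_s^n(x)]=\lambda_n$, I get $\bb E_n[\<M_t^n(x)\>] = 2\lambda_n n^4 t \leq 2n^4 t$ (also at $x=0$, by the convention). Hence
\[
\|P\|_{L^2(\bb P_n)} \;\leq\; \Big(2 n^4 t \sum_{x \in \bb N_0} h(x)^2\Big)^{1/2}.
\]

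For the drift term I set $Q = n^4 \int_0^t \phi(\eta_s^n)\,ds$ with $\phi(\eta)=\sum_x\big(g(\eta(x))-g(\eta(x\plus 1))\big)h(x)$, which has mean zero under $\mu^n$ since $\bb E_n[g(\eta(x))]=\lambda_n$ for every $x$. The Kipnis--Varadhan inequality (Proposition \ref{KV}) then gives $\bb E_n\big[(\int_0^t \phi\,ds)^2\big]\leq \frac{18t}{n^4}\|\phi\|_{-1}^2$, while the integration by parts formula (Proposition \ref{intpart}) yields $\|\phi\|_{-1}^2 \leq \sum_x h(x)^2$. Multiplying by $n^8$ I obtain
\[
\|Q\|_{L^2(\bb P_n)} \;\leq\; \Big(18\, n^4 t \sum_{x \in \bb N_0} h(x)^2\Big)^{1/2}.
\]

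Combining the two estimates by Minkowski's inequality,
\[
\Big\|\sum_{x\in\bb N_0} J_t^n(x)\,h(x)\Big\|_{L^2(\bb P_n)} \;\leq\; \|P\|_{L^2(\bb P_n)}+\|Q\|_{L^2(\bb P_n)} \;\leq\; \big(\sqrt{2}+\sqrt{18}\big)\Big(n^4 t \sum_{x\in\bb N_0} h(x)^2\Big)^{1/2},
\]
and since $\sqrt{2}+\sqrt{18}=4\sqrt{2}$, squaring gives the claimed bound with constant $(4\sqrt2)^2=32$. The step requiring the most care is the drift term: one must check that $\phi$ has zero mean (so that its $H_{-1}$-norm is defined) and then invoke Proposition \ref{intpart} correctly, which is precisely the reason the integration by parts formula was isolated. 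The remaining technical points are the treatment of the boundary site $x=0$ through the convention $g_s^n(0)=\lambda_n$ and the justification that the infinite sums converge in $L^2(\bb P_n)$, so that Minkowski applies in the limit.
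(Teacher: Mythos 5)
Your proof is correct and follows essentially the same route as the paper: decompose via \eqref{martdec_v0}, bound the martingale part by orthogonality and the drift part by Propositions \ref{KV} and \ref{intpart}, then combine with Minkowski (the paper's ``triangle inequality'' $E[(X+Y)^2]\leq (E[X^2]^{1/2}+E[Y^2]^{1/2})^2$), giving $(\sqrt{2}+\sqrt{18})^2=32$.
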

\begin{proof}
Multiplying \eqref{martdec_v0} by $f(x)$ and adding up on $x$ we obtain the martingale decomposition
\[
\sum_{x \in \bb N_0} J_t^n(x) h(x) = \sum_{x \in \bb N_0} M_t^n(x) h(x) + n^4 \int_0^t \sum_{x \in \bb N_0} \big( g_s^n(x)-g_s^n(x+1)\big) h(x) ds.
\]
Recall that the martingales $M_t^n(x)$ are mutually orthogonal. Therefore,
\[
\bb E_n \Big[ \Big( \sum_{x \in \bb N_0} M_t^n(x) h(x)\Big)^2 \Big] \leq 2 n^4 t \sum_{x \in \bb N_0} h(x)^2.
\]
The integral term can be estimated by combining Propositions \ref{KV} and \ref{intpart}. We obtain the bound
\[
\bb E_n\Big[\Big(n^4 \int_0^t \sum_{x \in \bb N_0} \big( g_s^n(x)-g_s^n(x+1)\big) h(x) ds\Big)^2 \Big]
		\leq 18 t n^4 \sum_{x \in \bb N_0} h(x)^2.
\]
By using the triangle inequality, 
\[
E[(X+Y)^2] \leq (E[X^2]^{1/2} + E[Y^2]^{1/2})^2
\]
the proposition is proved.
\end{proof}

This lemma is useful for two things. First we see that 
\[
\bb E_n \Big[ \Big( \frac{b^2}{ n^{5/2}} \sum_{x \in \bb N_0} J_s^n(x) \big( \Delta_x^n f - \Delta f\big(\tfrac{x}{n}\big)\big) \Big)^2 \Big]
		\leq \frac{32 s b^4}{n} \sum_{x \in \bb N_0} \big( \Delta_x^n f - \Delta f \big( \tfrac{x}{n}\big)\big)^2
\]
and in particular the first sum on the right-hand side of equation \eqref{cont2} is equal to 
\[
b^2 X_s^n(\Delta f) - \frac{b^2}{n^{3/2}} \sum_{x \in \bb N} \big( \eta_0^n(x) -\rho_n\big) f'\big(\tfrac{x}{n}\big)
\]
plus an error term that vanishes in $L^2(\bb P_n)$. The second sum on the right-hand side of \eqref{cont2} is equal to
\[
\frac{b^2}{n^{3/2}} \sum_{x \in \bb N} \big( \eta_0^n(x) -\rho_n\big) f'\big(\tfrac{x}{n}\big)
\]
plus an error term that vanishes in $L^2(\bb P_n)$. Therefore, the two first sums on the right-hand side of \eqref{cont2} are equal to $b^2 X_s^n(\Delta f)$ plus an error term that vanishes in $L^2(\bb P_n)$. It is exactly here that we need the extra sum on the definition of $X_t^n(f)$ introduced in \eqref{def}.

The second use for Proposition \ref{current} is to show that the third term on the right-hand side of \eqref{cont2} is negligible. We have that
\[
\bb E_n\big[ \big(n^{-3/2}J_s^n(0) \nabla_x^n f\big)^2 \big] \leq 32 s n (\nabla_0^n f)^2.
\]
It is exactly at this point that we need to assume that $f'(0)=0$. In that case, $|\nabla_0^n f| \leq \frac{1}{2n} \|\Delta f\|_\infty$ and the boundary term in \eqref{cont2} vanishes in $L^2(\bb P_n)$ as $n \to \infty$. By the Boltzmann-Gibbs principle stated in Proposition \ref{BG}, we conclude that 
\[
n^{1/2}\int_0^t \sum_{x \in \bb N} \big(g_s^n(x) \minus \lambda_n \big) \nabla_x^n f ds = \int_0^t X_s^n(\Delta f) ds
\]
plus an error term that vanishes in $L^2(\bb P_n)$ as $n \to \infty$. In particular, \eqref{martdec} can be written as
\[
M_t^n(f) = X_t^n(f) -X_0^n(f) - \int_0^t X_s^n(b^2\Delta f) ds
\]
plus a term that vanishes in $L^2(\bb P_n)$. Taking limits along the subsequence $n'$ we conclude that for any $f \in \mc C_c^\infty([0,\infty))$ such that $f'(0)=0$, 
\[
X_t^\infty(f) - X_0^\infty(f) - \int_0^t X_s^\infty(b^2 \Delta f) ds
\]
is a Brownian motion of variance $2\int f(x)^2 dx$. Recall that $X_0^n(f)$ converges to a Gaussian random variable of mean $0$ and variance $\frac{1}{b^2}\int f(x)^2 dx$. Therefore $X_0^\infty(f)$ is a spatial white noise of variance $\frac{1}{b^2}$. In other words, $\{X_t^\infty; t \in [0,T]\}$ is a stationary solution of the stochastic heat equation \eqref{SHE} and it is unique in distribution. We conclude that the sequence $\{X_t^n; t \in [0,T]\}_{n \in \bb N}$ has a unique limit point and therefore it converges to it, which proves Theorem \ref{t1}.

\subsection{Proof of Theorem \ref{fBM}}

The proof of Theorem \ref{fBM} follows an idea which we learned from \cite{RosVar}. Let $\varphi:(0,\infty) \to [0,\infty)$ be a smooth function of  support contained on $(0,1)$, such that $\int \varphi(x) dx =1$. For $\epsilon >0$ define $\varphi_\epsilon(x) = \frac{1}{\epsilon} \varphi(\frac{x}{\epsilon})$ and $h_\epsilon(x) = \int_x^\infty \frac{1}{\epsilon} \varphi(\frac{y}{\epsilon})dy$.  Notice that $h_\epsilon(0)=1$, $h_\epsilon(x) =0$ if $x \geq \epsilon$ and $\{\varphi_\epsilon; \epsilon >0\}$ is an approximation of the $\delta$ of Dirac.
Using the continuity relation \eqref{cont} we see that
\[
\frac{1}{n^{3/2}} \Big(J_t^n(0) - \sum_{x=1}^\ell J_t^n(x)\big(h_\epsilon\big(\tfrac{x}{n}\big)-h_\epsilon\big(\tfrac{x-1}{n}\big)\big) \Big) = \frac{1}{n^{3/2}} \sum_{x=1}^\ell \big(\eta_t^n(x)-\eta_0^n(x)\big) h_\epsilon \big(\tfrac{x}{n}\big).
\]
Up to some small error term, the sum on the left-hand side is just $X_t^n(\varphi_\epsilon)$.
In particular, there is a constant $C$ depending only on the parameters of the model and the choice of $\varphi$ such that
\[
\bb E_n\Big[ \Big(\frac{1}{n^{3/2}} J_t^n(0) - X_t^n(\varphi_\epsilon)\Big)^2\Big] \leq C \epsilon.
\]
Taking $n \to \infty$ and then $\epsilon \to 0$ we conclude that
\[
\lim_{n \to \infty} \frac{1}{n^{3/2}} J_t^n(0) = \lim_{\epsilon \to 0} X_t(\varphi_\epsilon)
\]
in the sense of finite-dimensional distributions, as soon as the right-hand side is well defined. As observed in \cite{DemTsa}, this limit exists and it is equal to a fractional Brownian motion of Hurst exponent $H=\frac{1}{4}$, which proves Theorem \ref{fBM}.

\section{Discussion and generalizations}
\label{s5}
\subsection*{Non nearest-neighbor transition rates}
As mentioned in the introduction, there is a combinatorial relation between the zero-range process studied in this article and the exclusion process with symmetric particles with the exception of the leftmost one. The large-density limit considered in this article corresponds to a vanishing density of particles in the exclusion process. Therefore, a proof of the main result of this article using the exclusion process representation and following the steps of \cite{DemTsa} does not seem to be out of reach. Nevertheless, the proof presented here has the advantage of being more general, since it is built upon general properties of interacting particle systems, namely the spectral gap inequality, the equivalent of ensembles and the product structure of the invariant measure. Let us mention here a simple generalization of the model for which our proof can be adapted. Let $p(\cdot)$ be a symmetric transition rate in $\bb Z$ with finite range, but not necessarily equal to 1. For $y \leq 0$, $x >0$ and $\eta \in \Omega$ such that $\eta(x) \geq 1$ we define $\eta^{x,y} = \eta - \delta_x$. The operator
\[
\tilde{L} f( \eta) = \sum_{\substack{x \in \bb N\\y \in \bb Z}} p(y-x) \mathbf{1}\{\eta(x) \geq 1\} \big[ f(\eta^{x,y})-f(\eta)\big] + \lambda \sum_{x \in \bb N} p(x) \big[ f(\eta+\delta_x)-f(\eta)\big] 
\]
defines a zero-range process in $\Omega$ which has the same invariant measure as the zero-range process with nearest-neighbor jumps introduced in Section \ref{s2}. Let us use the same notation $\{\eta_t^n; t \geq 0\}$ for the process generated by $\tilde{L}$. Currents need to be defined in a different way. Let us denote by $R$ the range of $p(\cdot)$. For $x,y \in \bb N$ with $|y-x| \leq R$ we define $J_t^n(x,y)$ the signed number of particles passing from $x$ to $y$ and let us define $J_t^n(0,x)$ as the number of particles created at $x$ minus the number of particles destroyed at $x$. Let us define
\[
J_t^n(x) = \sum_{\substack{0 \leq z \leq x \\ y >x}} J_t^n(z,y).
\]
The current fluctuation field $X_t^n$ is defined like in Section \ref{s2.2} using this version of the current processes. Then Theorem \ref{t1} holds as well:

\begin{theorem}
The sequence $\{X_t^n; t \geq 0\}_{n \in \bb N}$ converges in distribution with respect to the uniform topology to the martingale solution of 
\[
\partial_t X = b^2\sigma^2 \Delta X + \sqrt{2} \dot{\mc W} 
\]
with initial condition $X_0=0$, where $\sigma^2 = \sum_{z>0} z^2 p(z)$.
\end{theorem}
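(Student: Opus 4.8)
The plan is to run the entire argument of Sections \ref{s3}--\ref{s4} again, checking that every ingredient survives the passage to a finite-range symmetric rate $p(\cdot)$. The abstract estimates of Section \ref{s3} depend only on $\mu^n$, the spectral gap of the dynamics in a box, and the product structure of the invariant measure; since $\tilde L$ shares the reversible measure $\mu_\lambda$ with $L$ and the spectral gap inequality of Proposition \ref{SG} holds verbatim (its proof uses only the symmetric, finite-range bulk part of the generator), Propositions \ref{p1} and \ref{intpart} remain valid, now with constants depending on the range $R$. First I would record the martingale decomposition $M_t^n(f) = X_t^n(f) - X_0^n(f) - \int_0^t(\text{compensator})\,ds$. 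Computing the flux across each cut, the compensator of $\sum_x J_t^n(x) f(\tfrac xn)$ is governed in the bulk by the weighted difference
\[
G(\eta,x) = \sum_{w \geq 1} p(w) \Big[ \sum_{z=x-w+1}^{x} g(\eta(z)) - \sum_{z=x+1}^{x+w} g(\eta(z)) \Big].
\]
A discrete summation by parts turns $\sum_x f(\tfrac xn)\,G(\eta,x)$ into $\sigma^2 \sum_z g(\eta(z))\,\nabla_z^n f$ plus a second-order remainder, the factor $\sigma^2 = \sum_{z>0} z^2 p(z)$ appearing precisely as the second moment of the jump law; this is the mechanism that promotes the diffusivity from $b^2$ to $b^2\sigma^2$.

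Next I would treat tightness as in Section \ref{s3.2}. The jumps of $M_t^n(f)$ are bounded by $R\|f\|_\infty/n^{3/2}$ and hence vanish, giving part ii) of Proposition \ref{martwhitt}; for part i) I would evaluate $\<M_t^n(f)\>$ directly from the jump rates, noting that every transition crosses at most $R$ consecutive cuts, so the cut-currents are mutually orthogonal up to range $R$ and only finitely many cross terms survive. Using $\bb E_n[g_s^n(x)] = \lambda_n \to 1$ and $\tfrac1n\sum_x f(\tfrac xn)^2 \to \int f^2$, the same computation leading to \eqref{variation} yields $\<M_t^n(f)\> \to 2t\int f(x)^2\,dx$, so $M_t^n(f)$ converges to a Brownian motion of variance $2\int f^2$, which is exactly the noise $\sqrt2\,\dot{\mc W}$ of the statement. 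The integral term is handled by combining the integration-by-parts bound of Proposition \ref{intpart} with Kipnis--Varadhan and then Kolmogorov--Centsov (Proposition \ref{KC}), the only change from \eqref{est1} being a constant depending on $R$.

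Finally I would close the equation. The Boltzmann--Gibbs principle of Proposition \ref{BG} is local and rests only on Propositions \ref{p1}, \ref{intpart} and the tail estimate \eqref{c1}, all insensitive to $p$; it therefore applies to $\sum_z(g_s^n(z)-\lambda_n)\nabla_z^n f$ and replaces it by $\tfrac{1}{(1+\rho_n)^2}\sum_z(\eta_s^n(z)-\rho_n)\nabla_z^n f$. Feeding this into the compensator, using $\tfrac{1}{(1+\rho_n)^2}=\tfrac{b^2}{n^2}$ together with the continuity relation \eqref{cont} (which holds for the generalized current, as one checks from $J_t^n(x-1)-J_t^n(x)=\eta_t^n(x)-\eta_0^n(x)$), rewrites the drift as $b^2\sigma^2\int_0^t X_s^n(\Delta f)\,ds$ modulo an $L^2(\bb P_n)$-vanishing error; the hypothesis $f'(0)=0$ again annihilates the boundary term $n^{-3/2}J_s^n(0)\nabla_0^n f$, controlled through the analog of Proposition \ref{current}. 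Every limit point then solves the martingale problem for $\partial_t X = b^2\sigma^2\Delta X + \sqrt2\,\dot{\mc W}$ with $X_0=0$, and the uniqueness result of \cite{DemTsa} upgrades tightness to convergence. The main obstacle is the loss of exact orthogonality of the cut-currents: a single long jump updates up to $R$ cuts at once, so the range-$R$ cross terms must be tracked carefully both in the quadratic variation and in the compensator, and one must verify that, after using $\bb E_n[g]\to1$, they leave the noise intensity equal to $\sqrt2$ while their net effect on the drift is recorded entirely in the coefficient $\sigma^2$.
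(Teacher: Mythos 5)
Your overall plan --- rerun the proof of Theorem \ref{t1} with $R$-dependent constants in the spectral gap and integration-by-parts estimates, use the generalized continuity relation, Boltzmann--Gibbs, and kill the boundary term with $f'(0)=0$ --- is exactly the adaptation the paper has in mind (its own ``proof'' is a one-sentence remark to that effect), and your summation-by-parts identification of the drift coefficient $b^2\sigma^2$ is correct. The genuine failure is in your quadratic variation step. A single jump of length $w$ crosses the $w$ consecutive cuts $x=z,\dots,z+w-1$ simultaneously, so $\sum_x J_t^n(x) f(\tfrac{x}{n})$ jumps by $\sum_{x=z}^{z+w-1} f(\tfrac{x}{n}) \approx w f(\tfrac{z}{n})$, whose square is $\approx w^2 f(\tfrac{z}{n})^2$. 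The range-$R$ cross terms you propose to show are harmless are of exactly the same order as the diagonal ones, and they feed the second moment of $p(\cdot)$ into the noise just as they feed it into the drift:
\[
\bb E_n\big[\<M_t^n(f)\>\big]
= \frac{n^4}{n^5}\,\bb E_n\Big[\int_0^t \sum_{z,\,w\geq 1} p(w)\big(g_s^n(z)+g_s^n(z\plus w)\big)\Big(\sum_{x=z}^{z+w-1} f\big(\tfrac{x}{n}\big)\Big)^2 ds\Big] + \mc O\big(\tfrac{1}{n}\big)
\xrightarrow[n\to\infty]{}\; 2\sigma^2\, t \int f(x)^2\, dx ,
\]
not $2t\int f^2$. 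So the limiting martingale is a Brownian motion of variance $2\sigma^2\int f^2$; there is no mechanism by which the long jumps contribute $\sigma^2$ to the compensator yet cancel out of the noise. (Note that even your diagonal terms alone give $2\sum_{w>0} w\,p(w)\int f^2$, which already differs from $2\int f^2$ in general.)

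Consequently the route you took cannot be repaired to give the noise $\sqrt{2}\,\dot{\mc W}$: the statement as printed is internally inconsistent unless $\sigma^2=1$, and should be read as having a typo, the correct limit being the martingale solution of $\partial_t X = b^2\sigma^2 \Delta X + \sqrt{2\sigma^2}\,\dot{\mc W}$. You can see this without any computation on the dynamics: by the continuity relation, the law of $X_t^n(f)$ at each fixed $t$ coincides (up to vanishing errors) with that of $\frac{1}{n^{3/2}}\sum_x (\eta_t^n(x)-\rho_n) F(\tfrac{x}{n})$, whose limiting variance $\frac{1}{b^2}\int F^2$ is computed in Section \ref{s3.2} and is independent of $p(\cdot)$, since $\mu^n$ is; for the martingale solution of $\partial_t X = D\Delta X + \sqrt{A}\,\dot{\mc W}$ this time-stationary variance is proportional to $A/(2D)$, which forces $A = 2D/b^2 = 2\sigma^2$ when $D=b^2\sigma^2$. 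With the noise corrected to $\sqrt{2\sigma^2}$, the rest of your outline (drift via summation by parts, Boltzmann--Gibbs, boundary term, tightness, uniqueness from \cite{DemTsa}) does go through and is the adaptation the paper intends.
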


The interested reader will not find any difficulty to adapt the proof of Theorem \ref{t1} for this case. The only relevant difference is the following version of the continuity equation \eqref{cont}: for any $x<y$ such that $y-x>R$,
\[
J_t^n(x) - J_t^n(y) = \sum_{z=x+1}^y \big(\eta_t^n(z) -\eta_0^n(z)\big). 
\]

\subsection*{General interaction rates}

As mentioned before, the main properties of the zero-range process used in the proof of Theorem \ref{t1} are a sharp bound for the largest eigenvalue of the dynamics restricted to a finte box, the explicit knowledge of the invariant measure of the system and the equivalence of ensembles. With these three properties in hand, the proof follows through. In \cite{Nag}, the spectral gap inequality for the zero-range process with interaction rate $g(k) = k^\gamma$, $\gamma \in (0,1)$ was derived, as well as the estimates needed to obtain the equivalence of ensembles. Therefore, the proof of Theorem \ref{t1} could in principle be adapted to the zero-range process with interaction rate $g(k) = k^\gamma$.  The limiting equation will be the same of Theorem \ref{t1}, with diffusion coefficient and noise variance depending on the parameters of the model.

\section*{Acklowledgements}

F.H.~thanks FAPERJ for its support through the grant E-26/112.076/2013.
M.J.~thanks CNPq for its support through the grant 401628/2012-4 and FAPERJ for its support through the grant JCNE E17/2012. M.J.~would like to thank Amir Dembo for valuable discussions. F.V.~would like to thank the warm hospitality of IMPA, where this work was done.

%
%
%
%

%
%
%
%
%
%
%
%
%
%
%
%
%
%
%
%
%
%

\end{document}